\newcommand{\R}{{\mathbb R}} 
\renewcommand{\(}{\left(}
\renewcommand{\)}{\right)}
\newtheorem{theorem}{Theorem}
\newtheorem{corollary}[theorem]{Corollary}
\newtheorem{lemma}[theorem]{Lemma}
\newtheorem{definition}[theorem]{Definition}
\newtheorem{remark}[theorem]{Remark}
\begin{document}











\title[Differentiability of monotone maps related to non quadratic costs ~~~\today]{Differentiability of monotone maps\\ related to non quadratic costs}
\author[C. E. Guti\'errez and A. Montanari, \today]{Cristian E. Guti\'errez and Annamaria Montanari
\\
\today}
\thanks{
C.E.G. was partially supported by NSF grant DMS--1600578, and A. M. was partially supported by a grant from GNAMPA}
\address{Department of Mathematics\\Temple University\\Philadelphia, PA 19122}
\email{cristian.gutierrez@temple.edu}
\address{Dipartimento di Matematica\\Piazza di Porta San Donato 5\\Universit\`a di Bologna\\40126 Bologna, Italy}
\email{annamaria.montanari@unibo.it}

\maketitle

\begin{abstract}
The cost functions considered are $c(x,y)=h(x-y)$, with $h\in C^2\(\R^n\)$, homogeneous of degree $p\geq 2$, with positive definite Hessian in the unit sphere. We consider monotone maps $T$ with respect to that cost and establish local $L^\infty$-estimates of $T$ minus affine functions, which are applied to obtain differentiability properties of $T$ a.e. 
It is also shown that these maps are related
to maps of bounded deformation, and further differentiability and H\"older continuity properties are derived. \end{abstract}
\tableofcontents
\setstcolor{blue}

\setcounter{equation}{0}
\section{Introduction}
In this paper, we analyze the differentiability properties of monotone maps arising in optimal transport for cost functions that are not necessarily quadratic. 
We begin by proving $L^\infty$-estimates for these maps minus affine functions which are then applied to establish the differentiability of monotone maps using a notion of differentiability introduced by Calder\'on and Zygmund in their landmark paper \cite{Cal-Z}. Specifically, this notion of differentiability is preserved pointwise by singular integral operators.
This leads us also to a connection between monotone maps and maps of bounded deformation, a notion originated in elasticity \cite{Temam}.

The cost functions considered here have the form $c(x,y)=h(x-y)$ where $h\in C^2(\R^n)$, is nonnegative, positively homogeneous of degree $p$ for some $p\geq 2$, and satisfies \eqref{eq:strict ellipticity of D2h}. 
These costs include the power costs $|x-y|^p$ which are singular on the diagonal; in other words, $\dfrac{\partial^2 c}{\partial x\partial y}=0$ when $x=y$. Such a singularity prevents the application of results available for non-singular costs, such as those found in \cite{2020-otto-prodhomme-ried-general-costs}.
This work continues our research from \cite{Gutierrez-Montanari:Linfty-estimates} and 
\cite{Gutierrez-Montanari:ermanno-paper}, which originated from the fundamental work by Goldman and Otto \cite{2020-goldman-otto-variational}, who developed a variational approach to establish the partial regularity of optimal maps for quadratic costs.
The study of regularity of optimal maps for general costs has been a subject of important research, see for example the book \cite[Chapter 6]{ambrosio-gigli-savare-book}.

To introduce the notion of monotonicity, from optimal transport theory, if $c(x,y):D\times D^*\to [0,+\infty)$ is a general cost function, then 
the optimal map for the Monge problem is given by $T=\mathcal N_{c,\phi}$ where $\phi$ is $c$-concave and 
\[
\mathcal N_{c,\phi}(x)
=
\left\{m\in D^*:\phi(x)+\phi^c(m)=c(x,m)\right\}
\]
with $\phi^c(m)=\inf_{x\in D}\(c(x,m)-\phi(x)\)$, see for example \cite[Chapter 6]{Gutierrez:21}.
We say that a multivalued map $T:\R^n\to \mathcal P(\R^n)$ is $h$-monotone if
\begin{equation}\label{eq:map T is c-monotone h}
h\(x-\xi\)+ h\(y-\zeta\)\leq h\(x-\zeta\)+ h\(y-\xi\),
\end{equation}
for all $x,y\in \text{dom}(T)$ and for all $\xi\in T(x)$ and $\zeta\in T(y)$; where 
$\text{dom}(T)=\{x\in \R^n:T(x)\neq \emptyset\}$.
It is then clear that the optimal map $\mathcal N_{c,\phi}$ is $h$-monotone.

However, in our analysis, we will consider maps $T$ only satisfying \eqref{eq:map T is c-monotone h}; and {\it that are not necessarily optimal}. 
A challenge with \eqref{eq:map T is c-monotone h} is that $T$ is a multivalued map. However, we have proven in \cite{Gutierrez-Montanari:ermanno-paper} that for the costs considered $h$-monotone maps are single-valued a.e.. This proof is not trivial and therefore, the inequality \eqref{eq:map T is c-monotone h} holds a.e.

An outline of the paper is as follows.
Section \ref{sec:preliminaries} contains equivalent formulations of $h$-monotonicity, resembling the notion of standard monotone map, that are needed later.
Section \ref{sec:Linfty estimates} contains our first main result, Theorem \ref{thm:main Linfty estimate p-1}, concerning estimates of an $h$-monotone mapping minus affine functions.
Since the Calder\'on and Zygmund differentiability is related to approximation by polynomials, the estimates in Section \ref{sec:Linfty estimates} lead to differentiability properties of $h$-monotone maps which is the contents of Section \ref{sec:differentiability of h-monotone maps}.
Section \ref{subsec:bounded deformation} relates $h$-monotone maps with maps of bounded deformation, it shows that if $T$ is an $h$-monotone map, then the map $Dh(x-Tx)$ has bounded deformation and from \cite[Theorem 7.4]{ACDM} it is weakly differentiable a.e. As a consequence of the last result and the $L^\infty$-estimate of Theorem \ref{thm:main Linfty estimate p-1}, it is then proved in Section \ref{sec:Holder continuity} that $h$-monotone maps exhibit a H\"older continuity property a.e., Theorem \ref{Holder}. 
Finally, Section \ref{sec:appendix} is a brief appendix containing a result used in the proofs.

\section{Preliminaries on $h$-Monotone maps}\label{sec:preliminaries}
\setcounter{equation}{0}

In this section we present equivalent formulations of the Definition \eqref{eq:map T is c-monotone h} of $h$-monotonicity and integral representation formulas that will be needed to prove our results.
From \eqref{eq:map T is c-monotone h} and since $h\in C^2$ we have
\begin{align*}
0&\leq  h(y-\xi)-h(y-\zeta)-\(h(x-\xi)-h(x-\zeta)\)\notag\\
&=\int_0^1 \langle Dh(y-\zeta+s(\zeta-\xi)), \zeta-\xi\rangle ds -\int_0^1 \langle Dh(x-\zeta+s(\zeta-\xi)), \zeta-\xi\rangle ds\notag\\
&=\int_0^1 \langle Dh(y-\zeta+s(\zeta-\xi))-Dh((x-\zeta+s(\zeta-\xi)), \zeta-\xi\rangle ds\notag\\
&= \int_0^1\int_0^1\langle D^2h(y-\zeta+s(\zeta-\xi)+t (x-y)) (x-y),  (\xi-\zeta)\rangle dt\, ds\notag\\
&=  \langle A(x,y;\xi,\zeta) (x-y),  \xi-\zeta\rangle,\qquad \forall x,y\in \text{dom}(T), \xi\in T(x),\zeta\in T(y).
\end{align*}

Therefore \eqref{eq:map T is c-monotone h} is equivalent to 
\begin{equation}\label{eq:A}
 \langle A(x,y;\xi,\zeta) (x-y),  \xi-\zeta\rangle\geq 0,\qquad \forall x,y\in \text{dom}(T), \xi\in T(x),\zeta\in T(y)\footnote{When $h(x)=|x|^2$, this is the well-known notion of monotone map that has been the subject of large amount of research and applications, see for example the classical and polished book by H. Br\'ezis \cite{brezis-book-monotone-maps}.}
\end{equation}
with
\begin{equation}\label{eq:definition of A(x,y)}
A(x,y;\xi,\zeta)=\int_0^1\int_0^1 D^2h(y-\zeta+s(\zeta-\xi)+t (x-y))dt\, ds.
\end{equation}
The matrix $A(x,y;\xi,\zeta)$ is clearly symmetric, and satisfies $A(x,y;\xi,\zeta)=A(y,x;\zeta,\xi)$ by making the change of variables $t=1-t',s=1-s'$ in the integral.
If $h$ is homogenous of degree $p$ with $p\geq 2$, then $D^2h(z)$ is positively homogeneous of degree $p-2$.
We assume that $D^2h(x)$ is positive definite for each $x\in S^{n-1}$ and since $h\in C^2$, then there are positive constants $\lambda,\Lambda$ such that 
\begin{equation}\label{eq:strict ellipticity of D2h}
\lambda \,|v|^2
\leq 
\left\langle D^2h(x)v,v\right\rangle
\leq
\Lambda \,|v|^2,\qquad \forall x\in S^{n-1},v\in \R^n.
\end{equation}
We then have
\[
A(x,y;\xi,\zeta)=
\int_0^1 \int_0^1|y-\zeta+s(\zeta-\xi)+t (x-y)|^{p-2}D^2h\left(\frac{y-\zeta+s(\zeta-\xi)+t (x-y)}{ |y-\zeta+s(\zeta-\xi)+t (x-y)|}\right)dt \, ds
\]
and
\begin{equation}\label{eq:ellipticity of A}
\lambda \,\Phi(x,y;\xi,\zeta)\,|v|^2
\leq
\left\langle  A(x,y;\xi,\zeta)\,v,v\right\rangle\leq \Lambda \, \Phi(x,y;\xi,\zeta)\,|v|^2\quad \forall v \in \R^n,
\end{equation}
with 
\begin{equation}\label{eq:Phi}
 \Phi(x,y;\xi,\zeta)=\int_0^1\int_0^1|y-\zeta+s(\zeta-\xi)+t (x-y)|^{p-2}dt \, ds.
\end{equation}
We also have that $\Phi(x,y;\xi,\zeta)=0$ if and only if $y-\zeta+s(\zeta-\xi)+t (x-y)=0$ for all $s,t\in [0,1]$. That is, $\Phi(x,y;\xi,\zeta)=0$ if and only if $y-\zeta=0$, $\zeta-\xi=0$ and $x-y=0$.
Therefore $\Phi(x,y;\xi,\zeta)>0$ if and only if $\zeta\neq y$ or $\zeta\neq \xi$ or $x\neq y$.

If $T$ is $h$-monotone, then from \eqref{eq:map T is c-monotone h} we obviously have
{$
h\(y-Ty\)-h\(x-Ty\)\leq  h\(y-Tx\)- h\(x-Tx\).
$
Adding 
$
h(x-Ay-b)-h(y-Ay-b)
$, with $A$ any $n\times n$ matrix and $b\in \R^n$,
to the last inequality yields
\begin{align*}
&h\(y-Ty\)-h\(x-Ty\)
+h(x-Ay-b)-h(y-Ay-b)\\
&\leq  
h\(y-Tx\)-h(y-Ax-b)+h(x-Ax-b)- h\(x-Tx\)\\
&\qquad +h(y-Ax-b)-h(y-Ay-b)+h(x-Ay-b)-h(x-Ax-b).
\end{align*}
Therefore, if we set
\[
G(z_1,z_2,z_3)=h(z_2-z_3)-h(z_1-z_3)-h(z_2)+h(z_1),
\]
then \eqref{eq:map T is c-monotone h} is equivalent to
\begin{equation}\label{eq:main inequality to integrate}
G(x-Ay-b,y-Ay-b,Ty-Ay-b)
\leq
G(x-Ax-b,y-Ax-b,Tx-Ax-b)+P_{A,b}(x,y),
\end{equation}
where 
\[
P_{A,b}(x,y)=h(y-Ax-b)-h(y-Ay-b)+h(x-Ay-b)-h(x-Ax-b).
\]
To establish the desired estimate of $Tx-Ax-b$ in Theorem \ref{thm:main Linfty estimate p-1}, the idea is to use the potential theory formula \eqref{eq:third Green identity}
 with $v(x)$ equals the left hand side of \eqref{eq:main inequality to integrate} on a appropriate balls and majorize the resulting integrals using \eqref{eq:main inequality to integrate}. These integrations are possible due to the fact that the inequalities above hold a.e. which is a consequence of the fact proved in \cite[Theorem 1]{Gutierrez-Montanari:ermanno-paper} that $h$-monotone maps are single-valued a.e..
 Recall the formulas
\begin{equation}\label{eq:integral formula used many times}
h(\alpha)-h(\beta)=\int_0^1Dh\(\beta+s(\alpha-\beta)\) \cdot (\alpha-\beta) \, ds;
Dh(a)-Dh(b)=\int_0^1 D^2h\(b+\tau (a-b)\)(a-b)\,d\tau.
\end{equation}
We can then write the following integral representation formula for the function $G$:
\begin{align}\label{eq:integral representation for G}
G(z_1,z_2,z_3)
&=h(z_2-z_3)-h(z_1-z_3)-\(h(z_2)-h(z_1)\)\notag\\
&=
\int_0^1Dh\(z_1-z_3+s(z_2-z_1)\) \cdot (z_2-z_1) \, ds
-
\int_0^1Dh\(z_1+s(z_2-z_1)\) \cdot (z_2-z_1) \, ds\notag\\
&=
\int_0^1\left\{\int_0^1 D^2h\(z_1+s(z_2-z_1)-t z_3\)(-z_3)\,dt \right\} \cdot (z_2-z_1) \, ds\notag\\
&=
\int_0^1\int_0^1 \left\langle D^2h\(z_1+s(z_2-z_1)-t z_3\)z_3,  z_1-z_2\right\rangle \, ds\,dt.
\end{align}
Similarly, we obtain an integral expression for $P_{A,b}(x,y)$:
\begin{align}\label{eq:integral representation for H}
P_{A,b}(x,y)
&=
\int_0^1
\int_0^1 \left\langle D^2h\(x-Ay-b+s(Ay-Ax)+t(y-x)\)(y-x), Ay-Ax\right\rangle \, ds\,dt.
\end{align}

\section{$L^\infty$-estimates for $Tx-Ax-b$ with $T$ $h$-monotone}\label{sec:Linfty estimates}
\setcounter{equation}{0}
The purpose of this section is to prove the following local $L^\infty$-estimate for $h$-monotone maps.
\begin{theorem}\label{thm:main Linfty estimate p-1}
Let $p\geq 2$, $h\in C^2(\R^n)$, nonnegative, positively homogeneous of degree $p$, and satisfying \eqref{eq:strict ellipticity of D2h}. 
If $T\in L^{p-1}_{\text{loc}}(\R^n)$ is $h$-monotone, $A\in \R^{n\times n}$, $b\in \R^n$, $0<\beta<1$, and $u(x)=Tx-Ax-b$, then for each $x_0\in \R^n$ and $R>0$
\begin{equation}\label{eq:main L infty estimate general cost bis bis thm}
\sup_{x\in B_{\beta R}(x_0)}|u(x)|
\leq
\begin{cases}
K_1\,R\(R^{-(p-1)}\,\fint_{B_R(x_0)}|u(x)|^{p-1}\,dx\)^{1/(n+p-1)} & \text{if $\(\fint_{B_R(x_0)}|u(x)|^{p-1}\,dx\)^{1/(p-1)}\leq C(n,p,\beta)\,R$}\\
K_2\,R\,\(R^{-(p-1)}\,\fint_{B_R(x_0)}|u(x)|^{p-1}\,dx\)^{1/(p-1)}  & \text{if $\(\fint_{B_R(x_0)}|u(x)|^{p-1}\,dx\)^{1/(p-1)}\geq C(n,p,\beta)\,R$,}	
\end{cases}
\end{equation}
with the constant $K_1$ depending only on $n,p,\lambda,\Lambda$ in \eqref{eq:strict ellipticity of D2h}, and $\|A\|$; and with the constant $K_2$ depending only on the same parameters but also on $\beta$.

\end{theorem}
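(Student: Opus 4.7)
The plan is to combine the equivalent monotonicity inequality \eqref{eq:main inequality to integrate} with the potential-theoretic representation \eqref{eq:third Green identity} from the appendix, continuing the approach of \cite{Gutierrez-Montanari:Linfty-estimates}. Fix $x_0\in\R^n$, $R>0$, and choose $x^*\in \overline{B_{\beta R}(x_0)}$ at which $M:=|u(x^*)|$ attains $\sup_{B_{\beta R}(x_0)}|u|$. The estimate will be obtained by working on the ball $B_\rho(x^*)$ with $\rho\sim (1-\beta)R$, which is contained in $B_R(x_0)$.

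First, I would regard the left-hand side of \eqref{eq:main inequality to integrate} with $x$ replaced by $x^*$ as a function $v(y)=G(x^*-Ay-b,\,y-Ay-b,\,Ty-Ay-b)$ of $y$, and apply \eqref{eq:third Green identity} on $B_\rho(x^*)$ to convert pointwise control of $v$ into a weighted integral identity. By \eqref{eq:integral representation for G} and the ellipticity \eqref{eq:ellipticity of A}, the integrand is comparable to $\Phi(\cdots)\,\langle u(y),\,x^*-y\rangle$ with $\Phi$ homogeneous of degree $p-2$, so the identity produces on the left a weighted integral of $|u(y)|$. On the other side, \eqref{eq:main inequality to integrate} majorizes $v(y)$ a.e.\ by $G(x^*-Ax^*-b,y-Ax^*-b,u(x^*))+P_{A,b}(x^*,y)$; using \eqref{eq:integral representation for G}--\eqref{eq:integral representation for H} and \eqref{eq:ellipticity of A}, this majorant is controlled by a $\Phi'$-weighted expression involving $|u(x^*)|\,|x^*-y|$ plus $\|A\|\,|x^*-y|^{2}$. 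Inserting these bounds and performing the double integration yields, after bookkeeping of the $\Phi$ factors, an estimate of the schematic form
\[
M^{\,p-1}\,\rho \;\lesssim\; \fint_{B_\rho(x^*)}|u(y)|^{\,p-1}\,dy \;+\; \|A\|^{p-1}\,\rho^{\,p-1}.
\]

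The two cases of the theorem then emerge from a dichotomy on the size of $\Phi(x^*,y;u(x^*),u(y))\sim\bigl(M+|u(y)|+|x^*-y|\bigr)^{p-2}$. In the regime $\bigl(\fint_{B_R(x_0)}|u|^{p-1}\bigr)^{1/(p-1)}\leq C(n,p,\beta)\,R$, the $|x^*-y|$ term controls $\Phi$, so an extra power of $R$ arises from the $|x^*-y|\,dy$ integration, and solving for $M$ produces the exponent $1/(n+p-1)$ of the first case. In the opposite regime, $M^{p-2}$ controls $\Phi$ and the left side becomes of order $M^{p}$ directly, yielding the second case with exponent $1/(p-1)$. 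The $\|A\|$-dependent contribution of $P_{A,b}$ is absorbed into the main term by Young's inequality, producing the stated dependence of $K_1,K_2$ on $\|A\|$ (with $K_2$ also carrying $\beta$ through the ratio $\rho/R$).

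The hard part will be tracking the $\Phi$-factors carefully enough to produce precisely the exponents $1/(n+p-1)$ and $1/(p-1)$: the $(p-2)$-homogeneity of $D^2h$, the $n$-dimensional volume of $B_\rho$, and the $|x^*-y|$ weight from \eqref{eq:integral representation for G} must be balanced in both regimes simultaneously. The reliance on \eqref{eq:third Green identity} is essential because the integrand inherits from $D^2h$ a $(p-2)$-homogeneous singularity in $|x^*-y|$ that the Green/Poisson kernel on the ball absorbs cleanly, and because passing from the pointwise inequality \eqref{eq:main inequality to integrate} to a bound on $M$ at the single point $x^*$ requires integrating against $y$ in a way that respects this singular structure.
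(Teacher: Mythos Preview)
Your proposal has a genuine structural gap in the use of \eqref{eq:third Green identity}. You set $v(y)=G(x^*-Ay-b,\,y-Ay-b,\,Ty-Ay-b)$ and integrate over $y$; but this $v$ involves $Ty$, which is only in $L^{p-1}_{\text{loc}}$ and certainly not $C^2$, so $\Delta_y v$ is not defined and the representation formula does not apply. The paper's proof fixes $y$ and applies \eqref{eq:third Green identity} to $v(x)=G(x-Ay-b,\,y-Ay-b,\,u(y))$ as a function of $x$: here the only $x$-dependence is the explicit one through $h$, so $v\in C^2$ because $h\in C^2$ and $p\geq 2$. The monotonicity inequality \eqref{eq:main inequality to integrate} is then used only to bound the \emph{average} term $\fint v(x)\,dx$ from above, not the Laplacian term.

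Two further points where your outline departs from what actually works. First, the ball on which \eqref{eq:third Green identity} is applied is not $B_\rho(x^*)$ with $\rho\sim(1-\beta)R$; it is the \emph{shifted} ball $B_r(y+r\omega)$ with $\omega=u(y)/|u(y)|$ and $r=\delta\,|u(y)|$ for a small parameter $\delta$. The shift $r\omega=\delta\,u(y)$ is what makes the value $v(y+r\omega)=G(y-Ay-b+r\omega,\,y-Ay-b,\,u(y))$ admit the lower bound $\gtrsim \delta\,|u(y)|^2\,g(y)^{p-2}$ via a dedicated lemma on integrals $\int_0^1\int_0^1|v_1-(t-s\delta)v_2|^{p-2}\,ds\,dt$; on an unshifted ball this lower bound fails. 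Second, the dichotomy in \eqref{eq:main L infty estimate general cost bis bis thm} does not come from comparing which term dominates $\Phi$. After the pointwise work one obtains $|u(y)|^{p-1}\leq C\bigl(\fint_{B_r(y+r\omega)}|u|^{p-1}+r^{p-1}\bigr)$ valid for all $r=\delta|u(y)|$ with $\delta<\delta_0$; bounding the average by $r^{-n}\int_{B_R(x_0)}|u|^{p-1}$ and then \emph{minimizing} $H(r)=C(\Delta r^{-n}+r^{p-1})$ over $0<r\leq(1-\beta)R/2$ is what produces the exponent $1/(n+p-1)$ (at the interior minimizer $r_0\sim\Delta^{1/(n+p-1)}$) versus $1/(p-1)$ (at the endpoint), depending on whether $r_0$ lies in the admissible interval.
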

\begin{proof}
Our goal is to estimate the supremum of $|u|$ over a ball by the $L^{p-1}$-norm of $u$ over a slightly larger ball. To do this, the idea is to use the potential theory formula \eqref{eq:third Green identity} and estimate the integrals by integrating \eqref{eq:main inequality to integrate} in $x$.
The plan of the proof is as follows: 
\begin{enumerate}
\item Using \eqref{eq:third Green identity} we can write \eqref{eq:main formula A+B};
\item From \eqref{eq:G bigger than J} and Lemma \ref{lm:lower estimate of integral J}, a lower bound for the left hand side of \eqref{eq:main formula A+B} is proved in \eqref{Gatthecentermax};
\item Using the homogeneity of $h$ and \eqref{eq:strict ellipticity of D2h}, we estimate $II$ in \eqref{eq:main formula A+B} from above obtaining \eqref{eq:estimate of B};
\item next, using now that $T$ is $h$-monotone, i.e. \eqref{eq:main inequality to integrate}, an upper bound for $I$ in \eqref{eq:main formula A+B} follows, inequalities \eqref{A''} and \eqref{eq:estimate of A};
\item At this point we combine and balance the inequalities previously obtained for all terms in \eqref{eq:main formula A+B} obtaining the estimate \eqref{eq:estimate of u(y) simpler} for $|u|$ in a ball;
\item Finally, an estimate of the minimum in \eqref{eq:main estimate in r to iterate} yields the desired estimate.
\end{enumerate}

Let us set $\omega=\dfrac{u(y)}{|u(y)|}$ and $r=\delta \,|u(y)|$, with $\delta>0$ to be chosen; $u(y)\neq 0$.
Applying the identity \eqref{eq:third Green identity} with $v(x)\leadsto G(x-Ay-b,y-Ay-b,u(y))$ (a function that is $C^2$ in $x$ since $p\geq 2$) in the ball $B_r(y)\leadsto B_r(y+r\,\omega)$
yields
\begin{align}\label{eq:main formula A+B}
&v(y+r\,\omega)=G(y-Ay-b+ r\,\omega,y-Ay-b ,u(y))\notag\\
&=\fint_{B_r(y+r\,\omega)}G\(x-Ay-b,y-Ay-b,u(y)\)\,dx\notag \\
&\qquad +\dfrac{n}{r^n}\,
\int_0^r \rho^{n-1}\int_{|x-y-r \omega|\leq \rho}
\(\Gamma(x-y-r\,\omega)-\Gamma(\rho)\)\,\Delta_x G(x-Ay-b,y-Ay-b,u(y))\,
dx\,d\rho\notag\\
&=I+II.
\end{align} 

We first estimate the left hand side of \eqref{eq:main formula A+B}
from below.
From \eqref{eq:integral representation for G} write
\begin{align}\label{eq:G bigger than J}
&G(y-Ay-b+r\,\omega, y-Ay-b, u(y))\\
&= 
\int_0^1\int_0^1\left\langle D^2h\(y-Ay-b+r\omega- s\,r\,\omega-t\, u(y)\) u(y),r\omega \right\rangle \, ds \, dt\notag\\
&=\delta   \left\langle \int_0^1\int_0^1D^2h\(y-Ay-b+(1-s)\delta u(y)-t u(y)\)  \, ds \, dt  \, u(y), u(y)\right\rangle\,\text{ since $r\omega=\delta u(y)$}\notag\\
&=\delta   \left\langle \int_0^1\int_0^1D^2h\(y-Ay-b+ s\delta u(y)-tu(y)\)  \, ds \, dt  \, u(y), u(y)\right\rangle\text{ changing $1-s$ by $s$}\notag\\
&\geq \delta  \lambda  \left\langle \int_0^1\int_0^1|y-Ay-b-(t-s\delta)u(y)|^{p-2}  \, ds \, dt  \, u(y), u(y)\right\rangle \qquad \text{from \eqref{eq:strict ellipticity of D2h}}\notag\\
&\geq \delta  \lambda |u(y)|^2   \int_0^1\int_0^1|y-Ay-b-(t-s\delta)u(y))|^{p-2}  \, ds \, dt \notag.
\end{align}

The last double integral is estimated in the following lemma.

\begin{lemma}\label{lm:lower estimate of integral J}
Let $v_1,v_2$ be vectors in $\R^n$,  $p\geq 2$, $\delta>0$, and let
\begin{align*}
J=\int_0^1\int_0^1|v_1-(t-s\delta)v_2)|^{p-2}  \, ds \, dt.
\end{align*}
Then there exists $C_p,\delta_0>0$, both depending only on $p$, such that 
\begin{equation}\label{eq:lower estimate of J}
J\geq C_p\,\(\max\{|v_1|,|v_2|\}\)^{p-2},
\end{equation}
for $0<\delta\leq \delta_0$.
\end{lemma}
\begin{proof}
First make the change of variables $(\sigma,\tau)=\phi(s,t)=(t, t-s\delta)$, that is, $(s,t)=\phi^{-1}(\sigma,\tau)=\(\sigma,\frac{\sigma-\tau}{\delta}\)$, so $dsdt=\frac{1}{\delta}d\sigma d\tau$ and $R=\phi([0,1]^2)$ is the region bounded by the lines $\tau=\sigma$, $\sigma=1$, $\sigma=0$ and $\tau=\sigma-\delta$.
Hence 
\[
J=\frac{1}{\delta}\,\iint_R |v_1-\tau \,v_2|^{p-2}\,d\sigma d\tau
=\frac{1}{\delta}\,\int_0^1\(\int_{\sigma-\delta}^\sigma |v_1-\tau \,v_2|^{p-2}\,d\tau\) d\sigma.
\]
Suppose first that $|v_1|\geq |v_2|$. If $0\leq \sigma\leq 1$ and $\sigma-\delta\leq \tau\leq \sigma$, then $-\delta\leq \tau\leq 1$ ($0<\delta<1$) so $|\tau|\leq 1$, and $|v_1-\tau \,v_2|\geq ||v_1|-|\tau||v_2||=|v_1| \,\left|1-|\tau|\dfrac{|v_2|}{|v_1|}\right|=|v_1| \,\(1-|\tau|\dfrac{|v_2|}{|v_1|}\)\geq |v_1|\,\(1-|\tau|\)$. Hence
\begin{align*}
J&= \frac{1}{\delta}\,\int_0^1\(\int_{\sigma-\delta}^\sigma |v_1-\tau \,v_2|^{p-2}\,d\tau\) d\sigma\\
&\geq 
\frac{|v_1|^{p-2}}{\delta}\,\int_0^\delta\(\int_{\sigma-\delta}^\sigma \(1-|\tau |\)^{p-2}\,d\tau\) d\sigma
+
\frac{|v_1|^{p-2}}{\delta}\,\int_\delta^1\(\int_{\sigma-\delta}^\sigma \(1-|\tau |\)^{p-2}\,d\tau\) d\sigma\\
&=\frac{|v_1|^{p-2}}{\delta}\,I+\frac{|v_1|^{p-2}}{\delta}\,II
\end{align*}
We have 
\begin{align*}
I&= 
\int_0^\delta\(\int_{\sigma-\delta}^0 \(1-|\tau |\)^{p-2}\,d\tau
+
\int_0^\sigma \(1-|\tau |\)^{p-2}\,d\tau\) d\sigma\\
&=
\int_0^\delta\(\int_{\sigma-\delta}^0 \(1+\tau\)^{p-2}\,d\tau
+
\int_0^\sigma \(1-\tau\)^{p-2}\,d\tau\) d\sigma\\
&=
\dfrac{2\delta}{p-1}-\dfrac{2}{p(p-1)}+\dfrac{2(1-\delta)^p}{p(p-1)}.
\end{align*}
Also
\begin{align*}
II&= 
\int_\delta^1\int_{\sigma-\delta}^\sigma \(1-\tau\)^{p-2}\,d\tau\,d\sigma
=
\dfrac{1}{p(p-1)}-\dfrac{\delta^p}{p(p-1)}-\dfrac{(1-\delta)^p}{p(p-1)}.
\end{align*}
Then
\begin{align*}
I+II
=&
\dfrac{2\delta}{p-1}+\dfrac{(1-\delta)^p}{p(p-1)}-\dfrac{1}{p(p-1)}-\dfrac{\delta^p}{p(p-1)}\\
&\geq
\dfrac{2\delta}{p-1}-\dfrac{\delta}{p-1}-\dfrac{\delta^p}{p(p-1)}=\dfrac{\delta}{p-1}-\dfrac{\delta^p}{p(p-1)}.\end{align*}
Hence $\dfrac{1}{\delta}\(I+II\)\geq C_p>0$ for all $\delta<\delta_0$.
We then obtain 
\[
J\geq C_p\,|v_1|^{p-2}.
\]

Now suppose $|v_1|\leq |v_2|$.
Write $v_1=\alpha v_2+v_2^\perp$ where $v_2^\perp\perp v_2$, and by Pythagoras 
\[
|v_1-\tau \,v_2|^2=|(\alpha-\tau)v_2+v_2^\perp|^2=(\alpha-\tau)^2|v_2|^2+|v_2^\perp|^2
\geq (\alpha-\tau)^2|v_2|^2,
\]
so 
\begin{align*}
J&\geq \frac{1}{\delta}\,|v_2|^{p-2} 
\,
\int_0^1\(\int_{\sigma-\delta}^\sigma |\alpha-\tau|^{p-2}\,d\tau\) d\sigma\\
&=\frac{1}{\delta}\,|v_2|^{p-2} 
\,
\int_0^1\(\int_{-\delta}^1 \chi_{[\sigma-\delta,\sigma]}(\tau)\, |\alpha-\tau|^{p-2}\,d\tau\) d\sigma\\
&=\frac{1}{\delta}\,|v_2|^{p-2} 
\,
\int_{-\delta}^1|\alpha-\tau|^{p-2}\,|[0,1]\cap [\tau,\tau+\delta]| \,d\tau.
\end{align*}
Now 
\begin{equation*}
[0,1]\cap [\tau,\tau+\delta]
=
\begin{cases}
[0,\tau+\delta] & \text{if $-\delta\leq \tau\leq 0$}\\
[\tau,\tau+\delta] & \text{if $0\leq \tau\leq 1-\delta$}\\
[\tau,1] & \text{if $1-\delta\leq \tau\leq 1$}.
\end{cases}
\end{equation*}
Hence
\begin{align*}
&\int_{-\delta}^1|\alpha-\tau|^{p-2}\,|[0,1]\cap [\tau,\tau+\delta]| \,d\tau\\
&=
\int_{-\delta}^0|\alpha-\tau|^{p-2}\,(\tau+\delta) \,d\tau
+\int_{0}^{1-\delta}|\alpha-\tau|^{p-2}\,\delta \,d\tau
+\int_{1-\delta}^1|\alpha-\tau|^{p-2}\,(1-\tau) \,d\tau\\
&\geq
\delta \,\int_{0}^{1-\delta}|\alpha-\tau|^{p-2}\,d\tau
\geq \delta\,\int_{0}^{1/2}|\alpha-\tau|^{p-2}\,d\tau,\qquad \text{for $\delta<1/2$.}
\end{align*}
We have 
\[
\int_{0}^{1/2}|\alpha-\tau|^{p-2}\,d\tau= \int_{\alpha-1/2}^{\alpha}|s|^{p-2}\,ds\geq C_p>0\qquad \text{for all $\alpha\in \R$}.
\]
Therefore combining estimates we get 
\[
J\geq C_p \,|v_2|^{p-2}.
\]
and the lemma follows.
\end{proof}

Applying Lemma \ref{lm:lower estimate of integral J} with $v_1=y-Ay-b$ and $v_2=u(y)$ yields  from \eqref{eq:G bigger than J} that
\begin{equation}\label{Gatthecentermax} 
G\(y-Ay-b+r\,\omega, y-Ay-b, u(y)\)\geq \delta  \lambda  {C_p} |u(y)|^2 g(y)^{p-2},
\end{equation}
for $0<\delta<\delta_0$, where 
\[
g(y):=\max \Big\{ |y-Ay-b|, |u(y)|\Big\}.
\]

We now turn to estimate the terms $I$ and $II$ on the right hand side of \eqref{eq:main formula A+B}.
The first goal is to estimate $II$ proving \eqref{eq:estimate of B}.
From the definition of $G$ we have
\[
\Delta_x G(x-Ay-b,y-Ay-b,u(y))
=
\Delta h(x-Ay-b)-\Delta h(x-Ay-b-u(y)).
\]
Hence 
\begin{align*}
II
&=
\dfrac{n}{r^n}\int_0^r \rho^{n-1}
\,I(\rho,r,y)\,d\rho
\end{align*}
where 
\[
I(\rho,r,y)=\int_{B_\rho(y+r\omega)}\(\Gamma(x-y-r \omega)-\Gamma(\rho)\)\,
\(\Delta h(x-Ay-b)-\Delta h(x-Ay-b-u(y))\)\,dx.
\]
Making the change of variables $z=x-y-r\omega$ we have
\begin{align*}
I(\rho,r,y)=\int_{|z|\leq \rho}\(\Gamma(z)-\Gamma(\rho)\)\,
\(\Delta h(z+y+r\omega-Ay-b)-\Delta h(z+y+r\omega-Ay-b-u(y))\)\,dz.
\end{align*}
We have that $\Delta h$ is homogenous of degree $p-2$ so 
\begin{align*}
\Delta h(z+y+r\omega-Ay-b)&=
|z+y+r\omega-Ay-b|^{p-2}\,\Delta h\(\dfrac{z+y+r\omega-Ay-b}{|z+y+r\omega-Ay-b|}\).
\end{align*}
Write, with $e_1$ a fixed unit vector in $S^{n-1}$, 
{\small \begin{align*}
&\int_{|z|\leq \rho}\(\Gamma(z)-\Gamma(\rho)\)\,
\Delta h(z+y+r\omega-Ay-b)\,dz\\
&=\int_{|z|\leq \rho}\(\Gamma(z)-\Gamma(\rho)\)\,
|z+y+r\omega-Ay-b|^{p-2}\,\Delta h\(\dfrac{z+y+r\omega-Ay-b}{|z+y+r\omega-Ay-b|}\) \,dz\\
&=
\int_{|v|\leq \rho}\(\Gamma(v)-\Gamma(\rho)\)\,
|Ov+y-Ay-b+r\,Oe_1|^{p-2}\,\Delta h\(\dfrac{Ov+y-Ay-b+r\,Oe_1}{|Ov+y-Ay-b+r\,Oe_1|}\) \,dv,\,\text{$O$ rotation around $0$ with $Oe_1=\omega$}\\
&=
\int_{|v|\leq \rho}\(\Gamma(v)-\Gamma(\rho)\)\,
|Ov+Ov'+r\,Oe_1|^{p-2}\,\Delta h\(\dfrac{Ov+Ov'+r\,Oe_1}{|Ov+Ov'+r\,Oe_1|}\) \,dv,\quad y-Ay-b=Ov'\\
&=
\int_{|v|\leq \rho}\(\Gamma(v)-\Gamma(\rho)\)\,
|v+v'+r\,e_1|^{p-2}\,\Delta h\(\dfrac{O\(v+v'+r\,e_1\)}{|O\(v+v'+r\,e_1\)|}\) \,dv.
\end{align*}
}
{\small Similarly,
\begin{align*}
&\int_{|z|\leq \rho}\(\Gamma(z)-\Gamma(\rho)\)\,
\Delta h\(z+y+r\omega-Ay-b-u(y)\) \,dz\\
&=
\int_{|v|\leq \rho}\(\Gamma(v)-\Gamma(\rho)\)\,
|v+(r-|u(y)|)\,e_1+v'|^{p-2}\,\Delta h\(\dfrac{O\(v+(r-|u(y)|)\,e_1+v'\)}{|O\(v+(r-|u(y)|)\,e_1+v'\)|}\) \,dv,
\end{align*}
since $\omega=u(y)/|u(y)|$.
Thus
\begin{align*}
I(\rho,r,y)&=
\int_{|v|\leq \rho}\(\Gamma(v)-\Gamma(\rho)\)\,\\
&\qquad \(|v+v'+r\,e_1|^{p-2}\,\Delta h\(\dfrac{O\(v+v'+r\,e_1\)}{|O\(v+v'+r\,e_1\)|}\)
-
|v+(r-|u(y)|)\,e_1+v'|^{p-2}\,\Delta h\(\dfrac{O\(v+(r-|u(y)|)\,e_1+v'\)}{|O\(v+(r-|u(y)|)\,e_1+v'\)|}\)
\) \,dv.\end{align*}
We also have 
\begin{align*}
II&=
\dfrac{n}{r^n}\,\int_0^r \rho^{n-1}\,
I(\rho,r,y)\,d\rho=
n\,
\int_0^1t^{n-1}\,I(r\,t,r,y)\,dt.
\end{align*}
}
Now making the change of variables $v=r\zeta$ in the integral $I(rt,r,y)$ and setting $v'=r\zeta'$ yields
{\small \begin{align*}
&I(r\,t,r,y)\\
&=
\int_{|\zeta|\leq t}
\(\Gamma(r\,\zeta)-\Gamma(r\,t)\)\,\\
&\qquad \(|r\zeta+r\zeta'+r\,e_1|^{p-2}\,\Delta h\(\dfrac{O\(r\zeta+r\zeta'+r\,e_1\)}{|O\(r\zeta+r\zeta'+r\,e_1\)|}\)
-
|r\zeta+(r-|u(y)|)\,e_1+r\zeta'|^{p-2}\,\Delta h\(\dfrac{O\(r\zeta+(r-|u(y)|)\,e_1+r\zeta'\)}{|O\(r\zeta+(r-|u(y)|)\,e_1+r\zeta'\)|}\)
\) \,r^n\,d\zeta\\
&=r^p\,\int_{|\zeta|\leq t}
\(\Gamma(\zeta)-\Gamma(t)\)\,\\
&\qquad \(|\zeta+\zeta'+\,e_1|^{p-2}\,\Delta h\(\dfrac{O\(\zeta+\zeta'+\,e_1\)}{|O\(\zeta+\zeta'+\,e_1\)|}\)
-
|\zeta+(1-|u(y)|/r)\,e_1+\zeta'|^{p-2}\,\Delta h\(\dfrac{O\(\zeta+(1-|u(y)|/r)\,e_1+\zeta'\)}{|O\(\zeta+(1-|u(y)|/r)\,e_1+\zeta'\)|}\)
\) \,d\zeta
\end{align*}
}
and since $r=\delta\,|u(y)|$ we get 
\begin{align*}
II
&=
n\,
|u(y)|^p\,\delta^p\,\int_0^1t^{n-1}
\int_{|\zeta|\leq t}
\(\Gamma(\zeta)-\Gamma(t)\)\,\\
&\qquad \(|\zeta+\zeta'+\,e_1|^{p-2}\,\Delta h\(\dfrac{O\(\zeta+\zeta'+\,e_1\)}{|O\(\zeta+\zeta'+\,e_1\)|}\)
-
|\zeta+(1-1/\delta)\,e_1+\zeta'|^{p-2}\,\Delta h\(\dfrac{O\(\zeta+(1-1/\delta)\,e_1+\zeta'\)}{|O\(\zeta+(1-1/\delta)\,e_1+\zeta'\)|}\)
\) \,d\zeta\,dt\\
&=
n\,
|u(y)|^p\,\delta\,F(\delta,\zeta'),
\end{align*}
where 
\begin{align*}
F(\delta,\zeta')
&=
\delta^{p-1}\,
\int_0^1t^{n-1}
\int_{|\zeta|\leq t}
\(\Gamma(\zeta)-\Gamma(t)\)\,
 |\zeta+\zeta'+\,e_1|^{p-2}\,\Delta h\(\dfrac{O\(\zeta+\zeta'+\,e_1\)}{|O\(\zeta+\zeta'+\,e_1\)|}\)\,d\zeta\,dt\\
 &\qquad 
 -\delta\,
\int_0^1t^{n-1}
\int_{|\zeta|\leq t}
\(\Gamma(\zeta)-\Gamma(t)\)\,
|\delta\,\zeta+(\delta-1)\,e_1+\delta\,\zeta'|^{p-2}\,\Delta h\(\dfrac{O\(\delta\,\zeta+(\delta-1)\,e_1+\delta\,\zeta'\)}{|O\(\delta\,\zeta+(\delta-1)\,e_1+\delta\,\zeta'\)|}\)\,d\zeta\,dt\\
&=
\delta^{p-1}\,F_1(\delta,\zeta')-\delta\, F_2(\delta,\zeta')=\delta\,\(\delta^{p-2}\,F_1(\delta,\zeta')-F_2(\delta,\zeta')\).
\end{align*}
We claim that
\begin{equation}\label{eq:estimate of F(delta,zeta')}
|F(\delta,\zeta')|\leq C_{n,p}\,\Lambda\,\delta \, \( \frac{|y-Ay-b|}{|u(y)|}+1\)^{p-2},\qquad \delta<1/2.
\end{equation}
In fact, since $\Delta h$ is continuous, it is bounded in $S^{n-1}$. Also $Ov'=y-Ay-b$, $v'=r\,\zeta'$, $r=\delta\,|u(y)|$ and 
so $|\zeta'|=\dfrac{|y-Ay-b|}{\delta\,|u(y)|}$.
Thus
\[
|F_1(\delta,\zeta')|\leq C(n,p,\Lambda)\,(2+|\zeta'|)^{p-2}
\leq
C(n,p,\Lambda)\,\(\delta+\dfrac{|y-Ay-b|}{|u(y)|}\)^{p-2}\,\delta^{2-p}.
\]
Also 
\[
|F_2(\delta,\zeta')|\leq C(n,p,\Lambda)\,\(2+\delta\,|\zeta'|\)^{p-2}
\leq
C(n,p,\Lambda)\,\(2+\delta\,|\zeta'|\)^{p-2}
=
C(n,p,\Lambda)\,\(2+\dfrac{|y-Ay-b|}{|u(y)|}\)^{p-2}
\]
and then \eqref{eq:estimate of F(delta,zeta')} follows. This yields the following estimate of $II$:
\begin{align}\label{eq:estimate of B}
|II|
&\leq C(n,p,\Lambda)\, |u(y)|^p\,\delta^2\, \( \frac{|y-Ay-b|}{|u(y)|}+1\)^{p-2}\notag\\
&
\leq C(n,p,\Lambda)\, |u(y)|^2\,\delta^2\, g(y)^{p-2}
,\qquad \text{for $\delta<1/2$}.
\end{align}

We next estimate $I$ in \eqref{eq:main formula A+B}. 
Since $T$ is $h$-monotone, from \eqref{eq:main inequality to integrate}
\begin{align*}
I&=\fint_{B_r(y+r\,\omega)}G\(x-Ay-b,y-Ay-b,u(y)\)\,dx\\
&\leq
\fint_{B_r(y+r\,\omega)}G(x-Ax-b,y-Ax-b,u(x))\,dx+\fint_{B_r(y+r\,\omega)}P_{a,b}(x,y)\,dx
:=I'+I''.
\end{align*}

Let us estimate $I''$. From \eqref{eq:integral representation for H}, the $p-2$ homogeneity of $D^2h$, and Cauchy-Schwarz inequality it follows that
\begin{align*}
I''&=
\fint_{B_r(y+r\,\omega)}P_{A,b}(x,y)\,dx\\
&= \fint_{B_r(y+r\,\omega)} \int_0^1\int_0^1\left\langle D^2h\(y-Ax-b+t(Ax-Ay)+ s(x-y)\)A (x-y),(x-y) \right\rangle \, ds \, dt\,dx\\
&\leq \Lambda \,\|A\|
\fint_{B_r(y+r\,\omega)}\int_0^1 \int_0^1 |y-Ax-b+t(Ax-Ay)+ s(x-y)|^{p-2}\,|y-x|^2 ds\,dt\,dx\\
\\
&\leq \Lambda \|A\|
\fint_{B_r(y+r\,\omega)}\int_0^1 \int_0^1 |y-Ay-b+t(Ay-Ax)+ s(x-y)|^{p-2}\,|y-x|^2 ds\,dt\,dx,\quad \text{$t\leadsto 1-t$}\\
\\
&\leq \Lambda \|A\| C_p
\fint_{B_r(y+r\,\omega)}\( |y-Ay-b|^{p-2}+(\|A\|+1)^{p-2}\,|x-y)|^{p-2}\)\,|y-x|^2 \,dx\\
\\
&= \Lambda \|A\| C_p\(
|y-Ay-b|^{p-2}\fint_{B_r(y+r\,\omega)}|y-x|^2 \,dx+ (\|A\|+1)^{p-2}\fint_{B_r(y+r\,\omega)}\,|x-y|^{p}\) \,dx.\\
\end{align*}
If $x\in B_r(y+r\,\omega)$, then $|x-y|-r\leq |x-y-r\omega|\leq r$, that is, $|x-y|\leq 2r$ and we get
\begin{align}\label{A''}
I''&\leq \Lambda \|A\| C_p\(
|y-Ay-b|^{p-2}\, r^2+ (\|A\|+1)^{p-2}r^{p}\) .
\end{align}
Let us estimate $I'$. From \eqref{eq:integral representation for G}
with $z_1=x-Ax-b, z_2=y-Ax-b$ and $z_3=u(x)$ it follows as in the estimation of $I''$ that
{\small \begin{align*}
I'
&=
\fint_{B_r(y+r\,\omega)}\int_0^1 \int_0^1 \left\langle D^2h\(x-Ax-b-t\,u(x)+s\,(y-x)\)(y-x), -u(x)\right\rangle \,ds\,dt\,dx\\
&\leq
\Lambda 
\,\fint_{B_r(y+r\,\omega)}\int_0^1 \int_0^1 |x-Ax-b-t\,u(x)+s\,(y-x)|^{p-2}\,
|y-x|\,
|u(x)|
\,ds\,dt\,dx\\
&=
\Lambda\,\fint_{B_r(y+r\,\omega)} \,
|y-x|\,
|u(x)|\,\int_0^1 \int_0^1 |x-Ax-b-t\,u(x)+s\,(y-x)|^{p-2}
\,ds\,dt\,dx\\
&\leq
C_p\,\Lambda\,\fint_{B_r(y+r\,\omega)} \,
|y-x|\,
|u(x)|\,\(|x-Ax-b|^{p-2}+|u(x)|^{p-2}+|y-x|^{p-2}\)
\,dx\\
&=
C_p\,\Lambda\,
\(
\fint_{B_r(y+r\,\omega)} \,
|y-x|\,
|u(x)|\,|x-Ax-b|^{p-2}
\,dx
+
\fint_{B_r(y+r\,\omega)} \,
|y-x|\,
|u(x)|^{p-1}
\,dx
+
\fint_{B_r(y+r\,\omega)} \,
|u(x)|\,|y-x|^{p-1}
\,dx
\).
\end{align*}
}

Set
\begin{align*}
A_1
&=\fint_{B_r(y+r\,\omega)} \,
|y-x|\,
|u(x)|\,|x-Ax-b|^{p-2}
\,dx\\
A_2
&=
\fint_{B_r(y+r\,\omega)} \,
|y-x|\,
|u(x)|^{p-1}
\,dx\\
A_3
&=
\fint_{B_r(y+r\,\omega)} \,
|u(x)|\,|y-x|^{p-1}
\,dx.\end{align*}
If $x\in B_r(y+r\,\omega)$, then $|x-y|\leq 2r$.
Since $|Ax-Ay|\leq \|A\| \,|x-y|\leq 2r \|A\|$, we get
\begin{align*}
A_1
&\leq
2\,r\,\fint_{B_r(y+r\,\omega)} \,
|u(x)|\,|x-Ax-b|^{p-2}
\,dx
\leq
2\,r\,\(2r(1+\|A\|)+|y-Ay-b|\)^{p-2}\,\fint_{B_r(y+r\,\omega)} \,
|u(x)|
\,dx\\
A_2
&\leq
2\,r\, \fint_{B_r(y+r\,\omega)} \,
|u(x)|^{p-1}
\,dx\\
A_3
&\leq
(2r)^{p-1}\,
\fint_{B_r(y+r\,\omega)} \,
|u(x)|
\,dx.
\end{align*}
Consequently,
{\small \begin{equation}\label{eq:estimate of A}
I'
\leq
C_p\Lambda
\(r\,\(r(1+\|A\|)+|y-Ay-b|\)^{p-2}\,\fint_{B_r(y+r\,\omega)} \,
|u(x)|
\,dx
+
r\, \fint_{B_r(y+r\,\omega)} \,
|u(x)|^{p-1}
\,dx
+
r^{p-1}\,
\fint_{B_r(y+r\,\omega)} \,
|u(x)|
\,dx\).
\end{equation}}

Combining the estimates \eqref{Gatthecentermax}, \eqref{eq:estimate of B},  \eqref{eq:estimate of A}, and \eqref{A''} 
it follows that
\begin{align*}
&\delta\,\lambda\,C_p\,|u(y)|^2 g(y)^{p-2}\\
&\leq
C_{n,p}\, |u(y)|^2\,\delta^2\, g(y)^{p-2}\\
&\qquad +
C_p\Lambda
\(r\,\(r(1+\|A\|)+g(y)\)^{p-2}\,\fint_{B_r(y+r\,\omega)} \,
|u(x)|
\,dx
+
r\, \fint_{B_r(y+r\,\omega)} \,
|u(x)|^{p-1}
\,dx
+
r^{p-1}\,
\fint_{B_r(y+r\,\omega)} \,
|u(x)|
\,dx\)\\
&\qquad + \Lambda \|A\| C_p\(
g(y)^{p-2}\, r^2+ (\|A\|+1)^{p-2}r^{p}\) 
\end{align*}
for $0<\delta<1/2$, $\omega=u(y)/|u(y)|$, and $r=\delta|u(y)|$.
Dividing the last inequality by $\delta$ yields
\begin{align*}
&\lambda\,C_p\,|u(y)|^2\,g(y)^{p-2}\\
&\leq
C_{n,p}\, |u(y)|^2\,\delta\, g(y)^{p-2}\\
&+
\frac{C_p\Lambda}{\delta}
\(r\,\(r(1+\|A\|)+g(y)\)^{p-2}\,\fint_{B_r(y+r\,\omega)} \,
|u(x)|
\,dx+
r\, \fint_{B_r(y+r\,\omega)} \,
|u(x)|^{p-1}
\,dx
+
r^{p-1}\,
\fint_{B_r(y+r\,\omega)} \,
|u(x)|
\,dx\)\\
&\qquad + \frac{\Lambda \|A\| C_p}{\delta}
\(
g(y)^{p-2}\, r^2+ (\|A\|+1)^{p-2}r^{p}\)\\
&\leq
C_{n,p}\, |u(y)|^2\,\delta\, g(y)^{p-2}\\
&\qquad +
\frac{C_p\Lambda}{\delta}
\(\delta |u(y)|\,\(\delta (1+\|A\|) g(y)+g(y)\)^{p-2}\,\fint_{B_r(y+r\,\omega)} \,
|u(x)|
\,dx \right.\\
&\qquad \qquad \left.+
\delta |u(y)|\, \fint_{B_r(y+r\,\omega)} \,
|u(x)|^{p-1}
\,dx
+
\delta |u(y)|(\delta g(y))^{p-2}\,
\fint_{B_r(y+r\,\omega)} \,
|u(x)|
\,dx\)\quad \text{since $|u(y)|\leq g(y)$}\\
&\qquad +
\frac{\Lambda \|a\| C_p}{\delta}
\(
g(y)^{p-2}\, \delta^2\,|u(y)|^2+ (\|A\|+1)^{p-2}\delta^p\,|u(y)|^p\)\\
&\leq
C_{n,p}\, |u(y)|^2 \,g(y)^{p-2}\,\delta\\
&\qquad +
C_p\Lambda
\( |u(y)|\,g(y)^{p-2}\,\(\delta (1+\|A\|) +1\)^{p-2}\,\fint_{B_r(y+r\,\omega)} \,
|u(x)|
\,dx
\right.\\
&\qquad \qquad \qquad \left.+
 |u(y)|\, \(\frac{g(y)}{|u(y)|} \)^{p-2}\fint_{B_r(y+r\,\omega)} \, 
|u(x)|^{p-1}
\,dx
+
\delta^{p-2}\, |u(y)|\,g(y)^{p-2}\,
\fint_{B_r(y+r\,\omega)} \,
|u(x)|
\,dx\)\\
&\qquad +
\Lambda \|A\| C_p
\(
g(y)^{p-2}\, \delta\,|u(y)|^2+ (\|A\|+1)^{p-2}\delta^{p-1}\,|u(y)|^p\)
\end{align*}
where we have used $\frac{g(y)}{|u(y)|}\geq 1$.

Choosing $\delta_0=C(p,\lambda,\Lambda,\|A\|)>0$ sufficiently small, we then get for all $\delta\leq \delta_0$
that
{\small \begin{align*}
&(\lambda/2)\,C_p\,|u(y)|^2 \,g(y)^{p-2}\\
&\leq
C_p\Lambda
\(\(\(\delta (1+\|A\|)+1\)^{p-2}+\delta^{p-2}\)\,|u(y)| \,g(y)^{p-2}\,
\fint_{B_r(y+r\,\omega)} \,
|u(x)|
\,dx
+
|u(y)|\, \(\frac{g(y)}{|u(y)|} \)^{p-2} \fint_{B_r(y+r\,\omega)} \,
|u(x)|^{p-1}
\,dx
\)\\
&\qquad +
\Lambda \|A\| C_p
(\|A\|+1)^{p-2}\delta^{p-1}\,|u(y)|^p.
\end{align*}}
Multiplying this inequality by $\dfrac{|u(y)|^{p-3}}{g(y)^{p-2}}$ yields
{\small \begin{align*}
|u(y)|^{p-1}
&\leq
C(p,\lambda,\Lambda, \|A\|)
\(|u(y)|^{p-2}\,
\fint_{B_r(y+r\,\omega)} \,
|u(x)|
\,dx
+
\fint_{B_r(y+r\,\omega)} \,|u(x)|^{p-1}\,dx+
\delta^{p-2}\,|u(y)|^{p-2}\,
\fint_{B_r(y+r\,\omega)} \,
|u(x)|
\,dx\)\\
&\qquad +
C(\Lambda,p,\|A\|)\, \dfrac{\delta^{p-1}\,|u(y)|^{p-1}\,|u(y)|^{p-2}}{g(y)^{p-2}}\\
&\leq
C(p,\lambda,\Lambda, \|A\|)
\(|u(y)|^{p-2}\,
\fint_{B_r(y+r\,\omega)} \,
|u(x)|
\,dx
+
\fint_{B_r(y+r\,\omega)} \,|u(x)|^{p-1}\,dx+
\delta^{p-2}\,|u(y)|^{p-2}\,
\fint_{B_r(y+r\,\omega)} \,
|u(x)|
\,dx\)\\
&\qquad +
C(\Lambda,p,\|A\|)\, \delta^{p-1}\,|u(y)|^{p-1},
\end{align*}}
since $|u(y)|\leq g(y)$.
Hence, since $p>1$, we can choose $\delta_0$ even smaller than before, depending only on $\Lambda,p$ and $\|A\|$, so that the following estimate holds for $0<\delta<\delta_0$
{\small \begin{align*}
|u(y)|^{p-1}
&\leq
C(p,\lambda,\Lambda, \|A\|)
\(|u(y)|^{p-2}\,
\fint_{B_r(y+r\,\omega)} \,
|u(x)|
\,dx
+
\fint_{B_r(y+r\,\omega)} \,|u(x)|^{p-1}\,dx+
\delta^{p-2}\,|u(y)|^{p-2}\,
\fint_{B_r(y+r\,\omega)} \,
|u(x)|
\,dx\),
\end{align*}}
with $r=\delta\,|u(y)|$.
If $p=2$ we then get the inequality
\[
|u(y)|
\leq
C(p,\lambda,\Lambda, \|A\|)
\fint_{B_r(y+r\,\omega)} \,
|u(x)|
\,dx.
\]
When $p>2$ we write
\begin{align*}
|u(y)|^{p-2}\,
\fint_{B_r(y+r\,\omega)} \,
|u(x)|
\,dx
&\leq
\dfrac{p-2}{p-1}\(\dfrac{1}{\epsilon}\)^{(p-1)/(p-2)}|u(y)|^{p-1}+
\dfrac{1}{p-1}\(\epsilon\,\fint_{B_r(y+r\,\omega)} \,
|u(x)|
\,dx\)^{p-1}.
\end{align*}
Choosing $\epsilon>0$ large so that
$$
C(p,\lambda,\Lambda, \|A\|)\dfrac{p-2}{p-1}\(\dfrac{1}{\epsilon}\)^{(p-1)/(p-2)}\leq 1/2
$$
we then obtain the inequality
\begin{align}\label{eq:main inequality with delta}
|u(y)|^{p-1}
&\leq
C
\(\(
\fint_{B_r(y+r\,\omega)} \,
|u(x)|
\,dx\)^{p-1}
+
\fint_{B_r(y+r\,\omega)} \,|u(x)|^{p-1}\,dx
+
\delta^{p-2}\,|u(y)|^{p-2}\,
\fint_{B_r(y+r\,\omega)} \,
|u(x)|
\,dx\)\notag\\
&\leq
C
\(
\fint_{B_r(y+r\,\omega)} \,|u(x)|^{p-1}\,dx
+
\delta^{p-2}\,|u(y)|^{p-2}\,
\(\fint_{B_r(y+r\,\omega)} \,
|u(x)|^{p-1}
\,dx\)^{1/(p-1)}\)\notag,
\end{align}
from H\"older's inequality, $p-1\geq 1$.
Since $r=\delta\,|u(y)|$ we get
\begin{align*}
|u(y)|^{p-1}
&\leq
C
\(
\fint_{B_r(y+r\,\omega)} \,|u(x)|^{p-1}\,dx
+
r^{p-2}\,
\(\fint_{B_r(y+r\,\omega)} \,
|u(x)|^{p-1}
\,dx\)^{1/(p-1)}\).
\end{align*}
By Young's inequality 
\[
r^{p-2}\,
\(\fint_{B_r(y+r\,\omega)} \,
|u(x)|^{p-1}
\,dx\)^{1/(p-1)}
\leq
\dfrac{1}{p-1}
\(\(\fint_{B_r(y+r\,\omega)} \,
|u(x)|^{p-1}
\,dx\)^{1/(p-1)}\)^{p-1}
+
\dfrac{p-2}{p-1}\,\(r^{p-2}\)^{(p-1)/(p-2)},
\]
and so we obtain the estimate
\begin{equation}\label{eq:bound for u(y) to the p-1 delta_0}
|u(y)|^{p-1}
\leq
C\,\(\fint_{B_r(y+r\,\omega)} \,
|u(x)|^{p-1}
\,dx+r^{p-1}\),
\end{equation}
with a constant $C$ depending only on $n, p,\lambda,\Lambda$ and $\|A\|$ for $\delta=\frac{r}{|u(y)|}<\delta_0$.

Let us now fix a ball $B_R(x_0)$, and
suppose $y\in B_{\beta\,R}(x_0)$ with $0<\beta<1$, $R>0$. Then 
$B_r(y+r\,\omega) \subset B_R(x_0)$ for $r\leq \dfrac{1-\beta}{2}R$ and so
\[
\fint_{B_r(y+r\,\omega)}|u(x)|^{p-1}\,dx
\leq
\dfrac{C_n}{r^n}\,\int_{B_R(x_0)}|u(x)|^{p-1}\,dx.
\]

Given any $0<r\leq \dfrac{1-\beta}{2}R$, let $\delta=\dfrac{r}{|u(y)|}$.
If $\delta\leq \delta_0$, we then obtain from \eqref{eq:bound for u(y) to the p-1 delta_0}
\begin{align}\label{eq:estimate of u(y) simpler}
|u(y)|^{p-1}
&\leq
C
\(
\dfrac{1}{r^n}\,\int_{B_R(x_0)} \,|u(x)|^{p-1}\,dx
+
r^{p-1}\) \,:=H(r).
\end{align}
If $\delta\geq \delta_0$, then
\[
|u(y)|\leq \dfrac{r}{\delta_0}.
\]
Hence
\[
|u(y)|\leq \max \left\{H(r)^{1/(p-1)},\dfrac{r}{\delta_0} \right\}:=\bar H(r)
\]
for any $0<r \leq (1-\beta)R/2$, $y\in B_{\beta R}(x_0)$.
This means that 
\[
|u(y)|\leq \min_{0<r \leq (1-\beta)R/2} \bar H(r).
\]
Since \eqref{eq:estimate of u(y) simpler} holds with any constant larger than $C$, we take $C$ sufficiently large such that $C\geq 1/\delta_0^{p-1}$, in this way $H(r)^{1/(p-1)}\geq \dfrac{r}{\delta_0}$, and so $\max \left\{H(r)^{1/(p-1)}, \dfrac{r}{\delta_0} \right\}=H(r)^{1/(p-1)}$.
Therefore we obtain the estimate 
\begin{equation}\label{eq:main estimate in r to iterate}
|u(y)|
\leq 
\min_{0<r\leq (1-\beta)R/2} H(r)^{1/(p-1)}\qquad \text{for $y\in B_{\beta\,R}(x_0)$}.
\end{equation}
Next we estimate the minimum on the right hand side of \eqref{eq:main estimate in r to iterate}. Set 
\[
\Delta=\int_{B_R(x_0)}|u(x)|^{p-1}\,dx,
\]
so $H(r)= C\,\(\Delta\,r^{-n}+r^{p-1}\) $. The minimum of $H$ over $(0,\infty)$ is attained at 
\[
r_0=\(\dfrac{n\,\Delta}{p-1}\)^{1/(n+p-1)},
\]
$H$ is decreasing in $(0,r_0)$ and increasing in $(r_0,\infty)$, and
\[
\min_{[0,\infty)}H(r)=H(r_0)=
C\,\(\(\dfrac{n}{p-1}\)^{-n/(n+p-1)}+
\(\dfrac{n}{p-1}\)^{(p-1)/(n+p-1)}\)\,\Delta^{(p-1)/(n+p-1)}.
\]
If $r_0<(1-\beta)R/2$, then $\min_{0<r\leq (1-\beta)R/2} H(r)=H(r_0)$. 
On the other hand, if $r_0\geq (1-\beta)R/2$, that is, 
$\Delta\geq \(\dfrac{1-\beta}{2}R\)^{n+p-1} \dfrac{p-1}{n} :=\Delta_0$, then we have
\begin{align*}
\min_{0<r<(1-\beta)R/2} H(r)&=H\(\dfrac{1-\beta}{2}R\)
=
C\,\(\Delta\,\(\dfrac{1-\beta}{2}R\)^{-n}+
\(\dfrac{1-\beta}{2}R\)^{p-1}\)\\
&\leq
C\,\(\Delta\,\(\dfrac{1-\beta}{2}R\)^{-n}
+
\dfrac{n}{p-1}\,\Delta\,\(\dfrac{1-\beta}{2}R\)^{-n}\)\\
&=
C\,\dfrac{p+n-1}{p-1}\,\(\dfrac{1-\beta}{2}R\)^{-n}\Delta:=K_2\,R^{-n}\,\Delta.
\end{align*}
We then obtain the following estimate valid for all $0<\beta<1$ for $y\in B_{\beta\,R}(x_0)$
\begin{equation}\label{eq:main L infty estimate general cost}
|u(y)|^{p-1}
\leq
\begin{cases}
K_1\,\Delta^{(p-1)/(n+p-1)} & \text{if $\Delta\leq \Delta_0$}\\
K_2\,R^{-n}\,\Delta  & \text{if $\Delta\geq \Delta_0$,}	
\end{cases}
\end{equation}
with $K_1= C\,\(\(\dfrac{n}{p-1}\)^{-n/(n+p-1)}+
\(\dfrac{n}{p-1}\)^{(p-1)/(n+p-1)}\)$, 
$K_2=C\,\dfrac{p+n-1}{p-1}\,\(\dfrac{1-\beta}{2}\)^{-n}$, 
$\Delta= \int_{B_R(x_0)}|u(x)|^{p-1}\,dx$, and $\(\dfrac{1-\beta}{2}R\)^{n+p-1} \dfrac{p-1}{n} :=\Delta_0$.

We rewrite the last inequality in the following form
\begin{equation}\label{eq:main L infty estimate general cost bis}
\sup_{x\in B_{\beta R}(x_0)}|u(x)|
\leq
\begin{cases}
K_1\,R^{n/(n+p-1)}\(\fint_{B_R(x_0)}|u(x)|^{p-1}\,dx\)^{1/(n+p-1)} & \text{if $\(\fint_{B_R(x_0)}|u(x)|^{p-1}\,dx\)^{1/(p-1)}\leq C(n,p,\beta)\,R$}\\
K_2\,\(\fint_{B_R(x_0)}|u(x)|^{p-1}\,dx\)^{1/(p-1)}  & \text{if $\(\fint_{B_R(x_0)}|u(x)|^{p-1}\,dx\)^{1/(p-1)}\geq C(n,p,\beta)\,R$,}	
\end{cases}
\end{equation}
with $u(x)=Tx-Ax-b$, $p\geq 2$, and with the constant $K_1$ depending only on $n,p,\lambda,\Lambda$ and $\|A\|$; the constant $K_2$ depends only on the same parameters but also on $\beta$.

The inequality \eqref{eq:main L infty estimate general cost bis}
can be rewritten in the form \eqref{eq:main L infty estimate general cost bis bis thm} which resembles \cite[inequality (2.5)]{Gutierrez-Montanari:Linfty-estimates}.
This completes the proof of Theorem \ref{thm:main Linfty estimate p-1}.
\end{proof}

\section{Application to the differentiability of $h$-monotone maps}\label{sec:differentiability of h-monotone maps}
\setcounter{equation}{0}
Following Calder\'on and Zygmund \cite{Cal-Z}, see also \cite[Sect. 3.5]{Zim}, we recall the notion of differentiability in $L^p$-sense.
\begin{definition}
Let $1\leq p\leq \infty$, $k$ be a real number $\geq -n/p$ and $f\in L^p(\Omega)$, with $\Omega\subset \R^n$ open, and let $x_0\in \Omega$.
We say that $f\in T^{k,p}(x_0) \(f\in t^{k,p}(x_0)\)$\footnote{Sometimes we also use of the letter $T$ to denote a vector valued map. The expression $T\in T^{k,p}(x_0)$   means that the map $T$ belongs to the class $T^{k,p}(x_0)$, that is, each component belong to the class.}  if there exists a polynomial $P_{x_0}$ of degree strictly less than $k \(\text{$P_{x_0}$ of degree }\leq k\)$\footnote{If $k\leq 0$, $P_{x_0}\equiv 0$.} such that 
\begin{align*}
&\(\fint_{B_r(x_0)} |f(x)-P_{x_0}(x)|^p\,dx\)^{1/p}=O(r^k)\quad \text{as $r\to 0$}\\
&\(\(\fint_{B_r(x_0)} |f(x)-P_{x_0}(x)|^p\,dx\)^{1/p}=o(r^k)\quad \text{as $r\to 0$}\);
\end{align*}
when $p=\infty$ the averages are replaced by $\text{\rm ess sup}_{x\in B_r(x_0)}|f(x)-P_{x_0}(x)|=\|f-P_{x_0}\|_{L^\infty \(B_r(x_0)\)}$.
\end{definition}
From Theorem \ref{thm:main Linfty estimate p-1} we then get the following two corollaries.
\begin{corollary}
Let $T$ be $h$-monotone. If $T\in L^{p-1}\(B_\epsilon(x_0)\)$ and there exists a matrix $A=A_{x_0}$ and a vector $b=b_{x_0}$ such that 
\[
\(\fint_{B_R(x_0)}|Tx-Ax-b|^{p-1}\,dx\)^{1/(p-1)}=o(R)\quad \text{as $R\to 0$,}
\]
that is, $T\in t^{1,p-1}(x_0)$, then $T$ is differentiable in the ordinary sense at $x_0$.
\end{corollary}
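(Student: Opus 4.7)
The plan is to apply Theorem \ref{thm:main Linfty estimate p-1} at scales $R\to 0$ to $u(x):=Tx-Ax-b$, observe that the little-$o$ hypothesis forces us into the first (small) branch of the dichotomy \eqref{eq:main L infty estimate general cost bis bis thm}, and then translate the resulting $L^\infty$-smallness into ordinary differentiability at $x_0$ by choosing $R$ comparable to $|x-x_0|$.

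Fix any $\beta\in(0,1)$, for concreteness $\beta=1/2$. By hypothesis,
\[
\left(\fint_{B_R(x_0)}|u|^{p-1}\,dx\right)^{1/(p-1)}=o(R)\qquad\text{as }R\to 0,
\]
so for all sufficiently small $R$ this quantity is bounded by $C(n,p,\beta)\,R$, placing us in the first case of \eqref{eq:main L infty estimate general cost bis bis thm}:
\[
\sup_{x\in B_{\beta R}(x_0)}|u(x)|\le K_1\,R\left(R^{-(p-1)}\fint_{B_R(x_0)}|u|^{p-1}\,dx\right)^{1/(n+p-1)}.
\]
The bracket is the $(p-1)$-st power of $R^{-1}\bigl(\fint_{B_R(x_0)}|u|^{p-1}\,dx\bigr)^{1/(p-1)}$, which is $o(1)$ as $R\to 0$; raising to the positive power $1/(n+p-1)$ preserves this, so
\[
\sup_{x\in B_{\beta R}(x_0)}|u(x)|=o(R)\qquad\text{as }R\to 0.
\]

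To conclude ordinary differentiability, let $x$ be close to $x_0$ and set $R=|x-x_0|/\beta$, so that $x\in B_{\beta R}(x_0)$. The estimate just obtained gives
\[
|Tx-Ax-b|\le\sup_{y\in B_{\beta R}(x_0)}|u(y)|=o(R)=o(|x-x_0|),
\]
i.e.\ $T(x)=Ax+b+o(|x-x_0|)$ as $x\to x_0$. In particular, letting $x\to x_0$ along points of $\text{dom}(T)$ forces the value $Tx_0=Ax_0+b$, and $T$ is differentiable at $x_0$ in the ordinary sense with derivative $A$.

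There is no substantial obstacle: the corollary is a direct consequence of Theorem \ref{thm:main Linfty estimate p-1}, and the only care needed is the exponent bookkeeping that shows the bracket's $(n+p-1)$-th root still vanishes, together with the observation that the little-$o$ hypothesis lives precisely on the boundary separating the two regimes of the dichotomy, so that only the first branch can be active for small $R$.
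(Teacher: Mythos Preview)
Your proof is correct and follows essentially the same approach as the paper's own proof: both apply Theorem \ref{thm:main Linfty estimate p-1} (equivalently \eqref{eq:main L infty estimate general cost bis}), observe that the $o(R)$ hypothesis forces the first branch of the dichotomy for small $R$, and rewrite the resulting bound as $C\,R\,(\phi(R)/R)^{(p-1)/(n+p-1)}=o(R)$ where $\phi(R)=\bigl(\fint_{B_R(x_0)}|u|^{p-1}\bigr)^{1/(p-1)}$. Your version is slightly more explicit in spelling out the passage from $\sup_{B_{\beta R}(x_0)}|u|=o(R)$ to pointwise differentiability (by taking $R=|x-x_0|/\beta$) and in recovering $Tx_0=Ax_0+b$, whereas the paper simply states the conclusion; but the substance is identical.
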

\begin{proof}
Let $u(x)=Tx-Ax-b$, and $\phi(R)=\(\fint_{B_R(x_0)}|u(x)|^{p-1}\,dx\)^{1/(p-1)}$. From the assumption $\phi(R)/R\to 0$ as $R\to 0$, 
and so from \eqref{eq:main L infty estimate general cost bis}
\[
\sup_{B_{\beta R}(x_0)}|u(x)|
\leq
K_1\,R^{n/(n+p-1)}\,\phi(R)^{(p-1)/(n+p-1)}
=
C\,R\,\(\dfrac{\phi(R)}{R}\)^{(p-1)/(n+p-1)},
\]
that is, $T$ is differentiable at $x_0$.
\end{proof}}
\begin{corollary}\label{thm:differentiability of monotone maps}
Let $T$ be an $h$-monotone map in $L_{\text{loc}}^{p-1}\(\R^n\)$, $p\geq 2$, and such that
\begin{equation*}\label{eq:L2-integrability of T bis}
\(\fint_{B_R(x_0)}|Tx-b|^{p-1}\,dx\)^{1/(p-1)}=O\(R\)\quad \text{as $R\to 0$} 
\end{equation*}
for some vector $b=b_{x_0}$, i.e, $T\in T^{1,p-1}(x_0)$ for all $x_0$ in a measurable set $E$.
Then  
\[
\|Tx-A(x-x_0)-Tx_0\|_{L^\infty\(B_R(x_0)\)}=o\(R\)\quad \text{as $R\to 0$}
\] 
for almost all $x_0\in E$ and some $A=A_{x_0}\in \R^{n\times n}$, i.e., $T\in t^{1,\infty}(x_0)$ for a.e. $x_0\in E$.

\end{corollary}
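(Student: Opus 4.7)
The plan is to combine two ingredients: the Calderón--Zygmund upgrade from $T^{k,p}$-differentiability everywhere on $E$ to $t^{k,p}$-differentiability at a.e.\ point of $E$, and the $L^\infty$-estimate of Theorem \ref{thm:main Linfty estimate p-1}.

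First, I would invoke the classical Calderón--Zygmund differentiation theorem (from \cite{Cal-Z}; see also \cite[Sect. 3.5]{Zim}), applied componentwise: the hypothesis that $T\in T^{1,p-1}(x_0)$ at every $x_0\in E$ produces a measurable subset $E'\subset E$ of full measure in $E$ such that, for every $x_0\in E'$, there exist $A_{x_0}\in\R^{n\times n}$ and $b_{x_0}\in\R^n$ with
\[
\phi(R) := \left(\fint_{B_R(x_0)}|Tx - A_{x_0}x - b_{x_0}|^{p-1}\,dx\right)^{1/(p-1)} = o(R)\quad\text{as } R\to 0.
\]
That is, the $T^{1,p-1}$ hypothesis is automatically upgraded to $t^{1,p-1}$ almost everywhere on $E$, and the polynomial (of degree $\leq 1$) is genuinely affine.

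Next, set $u(x)=Tx-A_{x_0}x-b_{x_0}$. Since $\phi(R)/R\to 0$, for all sufficiently small $R$ the first alternative of Theorem \ref{thm:main Linfty estimate p-1} applies with, say, $\beta=1/2$, yielding
\[
\sup_{B_{R/2}(x_0)}|u|\leq K_1\, R\left(\frac{\phi(R)}{R}\right)^{(p-1)/(n+p-1)} = o(R).
\]
Replacing $R$ by $2R$ in this bound gives $\|u\|_{L^\infty(B_R(x_0))}=o(R)$, which already places $T$ in $t^{1,\infty}(x_0)$ with respect to the affine function $A_{x_0}x+b_{x_0}$.

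Finally, I would identify $b_{x_0}$ with $Tx_0-A_{x_0}x_0$ to arrive at the form stated in the corollary. By \cite[Theorem 1]{Gutierrez-Montanari:ermanno-paper}, $T$ is single-valued almost everywhere, so after restricting $E'$ further we may assume $Tx_0$ is well defined at $x_0\in E'$; evaluating the preceding $L^\infty$-bound at $x=x_0$ forces $|Tx_0-A_{x_0}x_0-b_{x_0}|=o(R)$ for all small $R$, hence $b_{x_0}=Tx_0-A_{x_0}x_0$ and therefore $A_{x_0}x+b_{x_0}=A_{x_0}(x-x_0)+Tx_0$. The only substantive analytic input is the Calderón--Zygmund upgrade from $O(R)$ everywhere on $E$ to $o(R)$ a.e.\ on $E$; once it is in hand, Theorem \ref{thm:main Linfty estimate p-1} mechanically converts the $L^{p-1}$-average statement into the desired uniform estimate, and the identification of $b_{x_0}$ is a routine consequence of single-valuedness a.e.
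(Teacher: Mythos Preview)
Your proof is correct but takes a different route from the paper's. The paper first applies Theorem \ref{thm:main Linfty estimate p-1} with $A=0$ to upgrade the hypothesis $T\in T^{1,p-1}(x_0)$ to $T\in T^{1,\infty}(x_0)$ for every $x_0\in E$ (using both alternatives of the dichotomy, since a priori one only has $\phi(R)=O(R)$), and then invokes Stepanov's theorem \cite[Chap.~VIII, Thm.~3]{Stein} to conclude ordinary differentiability a.e.\ on $E$. You instead reverse the order: first apply the Calder\'on--Zygmund theorem to pass from $T^{1,p-1}$ on all of $E$ to $t^{1,p-1}$ a.e.\ on $E$, and then feed the resulting $o(R)$ average into Theorem \ref{thm:main Linfty estimate p-1} (exactly as in the preceding corollary) to reach $t^{1,\infty}$. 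The paper's route has the advantage of invoking only the more elementary Stepanov theorem and of applying Theorem \ref{thm:main Linfty estimate p-1} with $A=0$, so the constants $K_1,K_2$ are uniform in $x_0$; your route needs the full Calder\'on--Zygmund upgrade in $L^{p-1}$ and produces constants depending on $\|A_{x_0}\|$, but in exchange it only ever uses the first branch of the dichotomy in Theorem \ref{thm:main Linfty estimate p-1} and makes the affine approximant explicit before passing to $L^\infty$. Both arguments are short and valid.
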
 
\begin{proof}
For each $x_0\in E$ there exist constants $M(x_0)\geq 0$, $R_0>0$ and $b\in \R^n$ such that
\[
\(\fint_{B_R(x_0)}|Tx-b|^{p-1}\,dx\)^{1/(p-1)}\leq M(x_0)\,R
\]
for all $0<R<R_0$, i.e., $T\in T^{1,p-1}(x_0)$.
Given $0<R<R_0$ we have either $\(\fint_{B_R(x_0)}|u(x)|^{p-1}\,dx\)^{1/(p-1)}\leq C(n,p,\beta)\,R$ or $\(\fint_{B_R(x_0)}|u(x)|^{p-1}\,dx\)^{1/(p-1)}\geq C(n,p,\beta)\,R$.
In the first case, 
from \eqref{eq:main L infty estimate general cost bis}
\[
\sup_{B_{\beta R}(x_0)}|Tx-b|\leq
K_1\,R^{n/(n+p-1)}\,\(\fint_{B_R(x_0)}|Tx-b|^{p-1}\,dx\)^{1/(n+p-1)}
\leq
C(n,\beta)\,R.
\]
In the second case,
\[
\sup_{B_{\beta R}(x_0)}|Tx-b|\leq
K_2\,\(\fint_{B_R(x_0)}|Tx-b|^{p-1}\,dx\)^{1/(p-1)}
\leq
K_2\,M(x_0)\,R.
\]
This means $\sup_{B_{R}(x_0)}|Tx-b|=O(R)$ as $R\to 0$ for all $x_0\in E$, i.e., $T\in T^{1,\infty}(x_0)$.
By Stepanov's theorem \cite[Chap. VIII, Thm. 3, p. 250]{Stein} this implies that 
$Tx$ is differentiable for a.e. $x_0\in E$, i.e., $T\in t^{1,\infty}(x_0)$ for a.e. $x_0\in E$.
\end{proof}

\begin{remark}\rm
In the particular case that $T$ is the optimal transport map, from \cite[Theorem 6.2.7]{ambrosio-gigli-savare-book} $T$ is approximately differentiable. 
\end{remark}

\setcounter{equation}{0}
\section{$h$-monotone maps and bounded deformation}\label{subsec:bounded deformation}

\begin{definition}
A mapping $u:\R^n\to \R^n$ is of bounded deformation, denoted $u\in BD(\R^n)$, if $u\in L^1_{\text{loc}}(\R^n)$ and the symmetrized gradient $\dfrac12\(\dfrac{\partial u}{\partial x}+\(\dfrac{\partial u}{\partial x}\)^t\)$, as a matrix valued distribution, can be represented by a matrix valued signed Radon measure.  
\end{definition}
This notion appears in elasticity, see \cite{Temam}.
It is proved in \cite[Theorem 4.5]{Gutierrez-Montanari:Linfty-estimates} that if $T$ is a monotone map in the standard sense, i.e., $h$-monotone with $h(x)=|x|^2$, with $T\in L^1_\text{loc}$, then $T\in BD(\R^n)$ and it is differentiable a.e. (see earlier results by \cite{Mig}).
For general $h$-monotone maps we have the following.

\begin{theorem}\label{thm:h-monotone is bounded deformation}
Let $T$ be an $h$-monotone map with $T\in L^{p-1}_\text{loc}(\R^n)$ and $h$ satisfying the assumptions of Theorem \ref{thm:main Linfty estimate p-1}. 
Then the map $Dh(x-Tx)$ is of bounded deformation and belongs to $t^{1,1}(x_0)$
for a.e. $x_0\in \R^n$. 
\end{theorem}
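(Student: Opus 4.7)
The aim is to show $F(x) := Dh(x-Tx)$ lies in $BD_{\mathrm{loc}}(\R^n)$; once this is established, \cite[Theorem 7.4]{ACDM} supplies the remaining conclusion $F \in t^{1,1}(x_0)$ for a.e.\ $x_0 \in \R^n$. The plan is to extract, directly from the integrated form of \eqref{eq:map T is c-monotone h}, a one-sided distributional inequality for the matrix $D^2 h(u)\, DT$ that is reminiscent of the classical fact that a monotone map has positive symmetric derivative. Rewriting \eqref{eq:map T is c-monotone h} as $h(x-Ty)+h(y-Tx)-h(x-Tx)-h(y-Ty) \geq 0$, substituting $y = x + \xi$, and applying \eqref{eq:integral formula used many times} twice, one obtains the pointwise integrated identity
\[
\int_0^1\!\!\int_0^1 \bigl\langle D^2 h\bigl(u(x)-s\Delta_\xi(x)+t\xi\bigr)\,\xi,\ \Delta_\xi(x)\bigr\rangle\, dt\, ds \;\geq\; 0,
\]
valid for a.e.\ $x$ and every $\xi \in \R^n$, where $u(x) := x - Tx$ and $\Delta_\xi(x) := T(x+\xi) - T(x)$. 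This is the finite-difference surrogate for the infinitesimal inequality $\langle D^2 h(u)\, DT\,\xi,\xi\rangle \geq 0$.

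The formal chain-rule identity $DF = D^2 h(u)(I-DT)$ yields
\[
\mathrm{sym}(DF) \;=\; D^2 h(u) \;-\; \mathrm{sym}\bigl(D^2 h(u)\, DT\bigr),
\]
so it suffices to argue that, in the distributional sense, $\mathrm{sym}(D^2 h(u)\, DT)$ is a non-negative matrix-valued Radon measure; then $\mathrm{sym}(DF)$ will be a signed Radon measure, which is exactly $F \in BD_{\mathrm{loc}}$. The first summand is already in $L^1_{\mathrm{loc}}$ because $|D^2 h(z)| \lesssim |z|^{p-2}$ by $p$-homogeneity and $u \in L^{p-1}_{\mathrm{loc}}$. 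To obtain the distributional non-negativity of the second summand, substitute $\xi = \varepsilon v$ into the inequality above, multiply by a non-negative test function $\phi \in C_c^\infty(\R^n)$, divide by $\varepsilon$, and pass $\varepsilon \to 0$ (as a $\liminf$). The $L^{p-1}_{\mathrm{loc}}$-integrability of $T$ together with the growth bound on $D^2 h$ supplies just enough control to identify the limit with the distributional pairing $\bigl\langle \phi,\ \langle \mathrm{sym}(D^2 h(u)\, DT)\,v,\, v\rangle\bigr\rangle$; this limit is $\geq 0$ because the pre-limit quantities are, and a non-negative distribution is a non-negative Radon measure. Polarising in $v$ extends the conclusion to the full matrix-valued statement.

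The principal difficulty is exactly this passage $\varepsilon \to 0$: $DT$ is only a distribution, $D^2 h(u)$ is not continuous, and the pointwise differentiability of $T$ is not a priori available (Corollary~\ref{thm:differentiability of monotone maps} requires the hypothesis $T \in T^{1,p-1}$ at the base point, which is essentially what we are trying to establish). The remedy is to keep the sign information inside the integrated form produced by \eqref{eq:integral formula used many times}, where the factor $D^2 h(u - s\Delta_\xi + t\xi)$ is always paired with the finite difference $\Delta_\xi$, never with a derivative of $T$; the pairing is then controlled uniformly in $\varepsilon$ by the $L^{p-1}_{\mathrm{loc}}$-bound on $T$ and the growth of $D^2 h$, so the product $D^2 h(u)\cdot DT$ is never separated and the distributional non-negativity is read off from the pre-limit sign. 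This is the same philosophy driving the $p=2$ case in \cite[Theorem 4.5]{Gutierrez-Montanari:Linfty-estimates}.
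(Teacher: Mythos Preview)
Your overall architecture coincides with the paper's: show that the distribution $D^2h(u)-\mathrm{sym}(DF)$ is nonnegative (equivalently, establish inequality~\eqref{eq:generalized bounded deformation}), and then invoke \cite[Theorem~7.4]{ACDM}. Your finite-difference inequality is also correctly derived. The genuine gap is the passage $\varepsilon\to 0$. After dividing by the appropriate power of $\varepsilon$ (it must be $\varepsilon^2$, not $\varepsilon$, since the expression is bilinear in $\xi$ and $\Delta_\xi$; dividing only by $\varepsilon$ yields a limit equal to zero), the integrand contains the quotient $\Delta_{\varepsilon v}(x)/\varepsilon$ multiplied by $D^2h$ evaluated at a point that itself depends on $\Delta_{\varepsilon v}(x)$. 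Under the sole hypothesis $T\in L^{p-1}_{\mathrm{loc}}$ neither factor is known to converge, and the product admits no integrable majorant: it is of the type $\bigl(|u|^{p-2}+|\Delta_{\varepsilon v}|^{p-2}\bigr)\,|\Delta_{\varepsilon v}|/\varepsilon$, which is not uniformly controlled in $\varepsilon$. ``Keeping the pairing together'' does not resolve this, because you must still \emph{identify} the limiting object with the specific distributional pairing $\int\langle D^2h(u)v,v\rangle\phi+\int(Dh(u)\cdot v)\,\partial_v\phi$, and you offer no mechanism for that identification.

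The paper supplies two ingredients you are missing. First, it invokes Theorem~\ref{thm:main Linfty estimate p-1} (with $A=0$, $b=0$) to conclude that $T$---and hence $h(x-Tx)$, $Dh(x-Tx)$, $D^2h(x-Tx)$---is locally \emph{bounded}; this makes all subsequent limits routine via uniform continuity of $D^2h$ on compact sets. Second, and this is the decisive manoeuvre, the paper does \emph{not} pass to your double-integral form before integrating against $\phi$. Instead it integrates the raw inequality
\[
0\le h(x+t\xi-Tx)-h(x-Tx)+h\bigl(x-T(x+t\xi)\bigr)-h\bigl(x+t\xi-T(x+t\xi)\bigr)
\]
against $\phi(x)$ and performs the change of variable $z=x+t\xi$ in the piece containing $T(x+t\xi)$. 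This converts every occurrence of $T$ at a translated point into $T$ at the integration variable, transferring the finite difference onto the test function: after the substitution the only difference quotient that survives is $\bigl(\phi(x-t\xi)-\phi(x)\bigr)/t$, which converges uniformly since $\phi\in C_c^\infty$. No quotient of the form $\Delta_{t\xi}/t$ ever appears, and the limit $t\to 0$ yields \eqref{eq:generalized bounded deformation} directly. This shift-to-$\phi$ trick is precisely what is absent from your sketch, and without it the limit you need cannot be justified.
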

\begin{proof}
Applying the monotonicity with $y=x+t\xi$
\[
0\leq h(x+t\xi-Tx)-h(x-Tx)+h(x-T(x+t\xi))-h(x+t\xi -T(x+t\xi)).
\]
From the estimates in Theorem \ref{thm:main Linfty estimate p-1}, it follows that the function $h(x-Tx)$ is locally bounded.
Hence multiplying the last inequality by $\phi\in C_0^\infty(\R^n)$, $\phi\geq 0$ we get using \eqref{eq:integral formula used many times}
\begin{align*}
0
&\leq
\int_{\R^n}
\left[ h(x+t\xi-Tx)-h(x-Tx)\right]\,\phi(x)\,dx+
\int_{\R^n}
\left[ h(z-t\xi-Tz)-h(z -Tz)\right]\,\phi(z-t\xi)\,dz\\
&=
\int_{\R^n}
\left[ \int_0^1 Dh(x-Tx+st\xi)\cdot (t\xi)\,ds\right]\,\phi(x)\,dx+
\int_{\R^n}
\left[ \int_0^1 Dh(z-Tz-st\xi)\cdot (-t\xi)\,ds\right]\,\phi(z-t\xi)\,dz\\
&=
\int_{\R^n}
\int_0^1 \left[ Dh(x-Tx+st\xi)-Dh(x-Tx-st\xi)\right]\cdot (t\xi)\,ds\,\phi(x)\,dx\\
&\qquad
-
\int_{\R^n}
\left[ \int_0^1 Dh(x-Tx-st\xi)\cdot (t\xi)\,ds\right]\,\(\phi(x-t\xi)-\phi(x)\)\,dx\\
&=
\int_{\R^n}
\int_0^1 \left[ \int_0^1 D^2h\(x-Tx-st\xi+\tau(2st\xi)\)(2st\xi)\,d\tau\right]\cdot (t\xi)\,ds\,\phi(x)\,dx\\
&\qquad
-
\int_{\R^n}
\left[ \int_0^1 Dh(x-Tx-st\xi)\cdot (t\xi)\,ds\right]\,\(\phi(x-t\xi)-\phi(x)\)\,dx\\
&=
t^2\,\(2\,A-B\),
\end{align*}
with 
\begin{align*}
A
&=
\int_{\R^n}
\int_0^1 \left[ \int_0^1 D^2h\(x-Tx-st\xi+\tau(2st\xi)\)(s\xi)\,d\tau\right]\cdot \xi\,ds\,\phi(x)\,dx\\
B
&=
\int_{\R^n}
\left[ \int_0^1 Dh(x-Tx-st\xi)\cdot \xi\,ds\right]\,\dfrac{\phi(x-t\xi)-\phi(x)}{t}\,dx.
\end{align*}
We calculate the limits of $A,B$ when $t\to 0$:
\begin{align*}
&\int_0^1  \int_0^1 D^2h\(x-Tx-st\xi+\tau(2st\xi)\)s\,d\tau\,ds\\
&\to \int_0^1  \int_0^1 D^2h\(x-Tx\)s\,d\tau\,ds=\frac12\,D^2h\(x-Tx\)
\end{align*}
as $t\to 0$ on compact subsets of $\R^n$ ($Tx$ is locally bounded), since $D^2h$ is uniformly continuous on compact sets.
Hence
\[
A\to \frac12 \, \int_{\R^n} \left\langle D^2h\(x-Tx\)\xi,\xi\right\rangle\,\phi(x)\,dx.
\]
And also
\[
B\to 
-\int_{\R^n}
\(Dh(x-Tx)\cdot \xi \)\,\partial_\xi \phi(x)\,dx.
\]
We then obtain the inequality
\begin{equation}\label{eq:generalized bounded deformation}
\int_{\R^n} \left\langle D^2h\(x-Tx\)\xi,\xi\right\rangle\,\phi(x)\,dx
+
\int_{\R^n}
\(Dh(x-Tx)\cdot \xi \)\,\partial_\xi \phi(x)\,dx\geq 0
\end{equation}
for all $\phi\in C_0^\infty$, $\phi\geq 0$, and each unit vector $\xi$.
Now 
\begin{align*}
\int_{\R^n}
\(Dh(x-Tx)\cdot \xi \)\,\partial_\xi \phi(x)\,dx
=
-\sum_{i,j=1}^n\left\langle \frac12\(\dfrac{\partial h_{x_i}(x-Tx)}{\partial x_j}+\dfrac{\partial h_{x_j}(x-Tx)}{\partial x_i}\),\phi\right\rangle\,\xi\,\xi_j
\end{align*}
with derivatives in the sense of distributions,
and similarly
\begin{align*}
&\int_{\R^n} \left\langle D^2h\(x-Tx\)\xi,\xi\right\rangle\,\phi(x)\,dx
=
\sum_{i,j=1}^n \left\langle h_{x_i x_j}(x-Tx),\phi\right\rangle \,\xi_i\,\xi_j.
\end{align*}
Therefore, if $a_{ij}$ is the scalar distribution defined by 
\begin{equation}\label{eq:definition of entries of a}
a_{ij}=h_{x_i x_j}(x-Tx)-\frac12\(\dfrac{\partial h_{x_i}(x-Tx)}{\partial x_j}+\dfrac{\partial h_{x_j}(x-Tx)}{\partial x_i}\),
\end{equation}
then the matrix valued distribution $a=\(a_{ij}\)$ is non negative and so it can be represented with a matrix-valued Radon measure, see \cite[Thm. 2.1.7]{LH:book on pdes vol I}\footnote{As usual, a measure is a nonnegative (in the sense needed) $\sigma$-additive set function, and a signed measure is a  $\sigma$-additive set function that can take positive and negative values.}. 
Since $T$ is locally bounded and $h\in C^2$, then matrix $D^2 h(x-Tx)$ defines a matrix valued distribution of order zero. Therefore the matrix 
\[
\frac12\,\( \dfrac{\partial Dh(x-Tx)}{\partial x}+\(\dfrac{\partial Dh(x-Tx)}{\partial x}\)^t\)
\]
can be represented in the sense of distributions with a matrix valued signed Radon measure, that is, $Dh(x-Tx)$ is of bounded deformation as desired.
We can now apply \cite[Theorem 7.4]{ACDM} (see also \cite{Haj}) to conclude that the mapping $Dh(x-Tx)\in t^{1,1}(x_0)$
for a.e. $x_0\in \R^n$.

\end{proof}
\color{black}

\subsection{H\"older continuity of $h$-monotone maps}\label{sec:Holder continuity}
The purpose of this section is to show that $h$-monotone maps $T$ that are locally in $L^{p-1}$ satisfy the H\"older type condition $T\in T^{1/(p-1),\infty}$ which is the contents of Theorem \ref{Holder}. This will follow as a consequence of Theorems \ref{thm:h-monotone is bounded deformation} and \ref{thm:main Linfty estimate p-1}.
We begin with a couple of lemmas.
\begin{lemma}\label{lemma2}
Let $v_1,v_2$ be vectors in $\R^n$,  $p\geq 2$,  and let
\begin{align*}
J=\int_0^1|v_1+tv_2|^{p-2}  \, dt.
\end{align*}
Then there exist constants $c_p, C_p>0$, both depending only on $p$, such that 
\begin{equation}\label{eq:lower estimate of J}
c_p\,\(\max\{|v_1|,|v_2|\}\)^{p-2}\leq J\leq C_p\,\(\max\{|v_1|,|v_2|\}\)^{p-2},
\end{equation}

\end{lemma}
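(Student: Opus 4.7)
\textbf{Plan for Lemma \ref{lemma2}.}

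The upper bound is immediate from the triangle inequality: for $t\in[0,1]$ one has $|v_1+tv_2|\le |v_1|+|v_2|\le 2\max\{|v_1|,|v_2|\}$, so, raising to the $(p-2)$-th power (legal since $p\ge 2$) and integrating over $[0,1]$, we obtain $J\le 2^{p-2}\max\{|v_1|,|v_2|\}^{p-2}$, and $C_p=2^{p-2}$ works.

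For the lower bound my plan is a homogeneity-plus-compactness argument rather than a direct calculation. Note that the integrand is positively homogeneous of degree $p-2$ jointly in $(v_1,v_2)$, and by rotational invariance $J$ depends only on the three scalars $|v_1|,|v_2|,v_1\cdot v_2$. Setting $M=\max\{|v_1|,|v_2|\}>0$ and $u_i=v_i/M$, we may write
\[
J=M^{p-2}\,\tilde J(u_1,u_2),\qquad \tilde J(u_1,u_2):=\int_0^1|u_1+tu_2|^{p-2}\,dt,
\]
and the pair $(u_1,u_2)$ lies in the compact set
\[
K=\bigl\{(u_1,u_2)\in\R^n\times\R^n:\ |u_i|\le 1,\ \max\{|u_1|,|u_2|\}=1\bigr\}.
\]
The integrand is uniformly bounded on $K\times[0,1]$, so by dominated convergence $\tilde J$ is continuous on $K$. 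Moreover $\tilde J>0$ on $K$: if $\tilde J(u_1,u_2)=0$ then $u_1+tu_2=0$ for a.e.\ $t\in[0,1]$, forcing $u_1=u_2=0$, contradicting $\max\{|u_1|,|u_2|\}=1$. By compactness $\tilde J$ attains a strictly positive minimum $c_p$, and the rotational reduction collapses $K$ to a three-parameter compact set so that $c_p$ depends only on $p$. The degenerate case $M=0$ is trivial (both sides vanish for $p>2$; for $p=2$ the integrand is $1$).

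I do not foresee a genuine obstacle. The only subtlety is the claim that $c_p$ is dimension-independent, which is resolved by rotational reduction as above. If instead one prefers an explicit quantitative argument in the spirit of Lemma \ref{lm:lower estimate of integral J}, one can decompose $v_1=\alpha v_2+w$ with $w\perp v_2$, use $|v_1+tv_2|^2\ge(\alpha+t)^2|v_2|^2$, and reduce to the one-dimensional integral $\int_\alpha^{\alpha+1}|s|^{p-2}\,ds$; this integral is continuous and strictly positive in $\alpha$, tends to $\infty$ as $|\alpha|\to\infty$, and after splitting on whether $|v_1|$ or $|v_2|$ dominates delivers the bound with an explicit $c_p$. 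Either route yields the asserted estimate.
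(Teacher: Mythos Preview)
Your argument is correct. The upper bound matches the paper's. For the lower bound you take a genuinely different route: the paper argues directly by splitting into the two cases $|v_1|\ge|v_2|$ and $|v_1|\le|v_2|$, using in the first case the reverse triangle inequality $|v_1+tv_2|\ge|v_1|(1-t)$ and in the second the orthogonal decomposition $v_1=\alpha v_2+v_2^\perp$ together with the explicit evaluation $\min_{|\alpha|\le1}\int_\alpha^{\alpha+1}|\tau|^{p-2}\,d\tau=\int_{-1/2}^{1/2}|\tau|^{p-2}\,d\tau$, thereby producing explicit constants. Your homogeneity-plus-compactness argument is cleaner and avoids case analysis, at the price of being non-constructive; the dimension-independence step via rotational reduction to the Gram data $(|v_1|,|v_2|,v_1\cdot v_2)$ is the right way to ensure $c_p$ does not depend on $n$. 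The alternative explicit route you sketch at the end is exactly the paper's proof, so you have in effect given both arguments.
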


\begin{proof} The proof is similar to the proof of Lemma \ref{lm:lower estimate of integral J}.
By triangle inequality and the monotonicity of $f(r)=r^{p-2}$ for $r>0,$ we have
\[
J=\int_0^1|v_1+tv_2|^{p-2}  \, dt\leq \int_0^1\left(|v_1|+t|v_2|\right)^{p-2}  \, dt\leq \(\max\{|v_1|,|v_2|\}\)^{p-2}\int_0^1\left(1+t\right)^{p-2} \, dt.
\]
To prove the estimate of $J$ from below, suppose first that $|v_1|\geq |v_2|$. Hence, again
by triangle inequality and the monotonicity $f(r)=r^{p-2}$ for $r>0,$ 
\begin{align*}
J&= \int_0^1 |v_1+t\,v_2|^{p-2}\,dt \geq \int_0^1 \big| |v_1|-t\,|v_2|\big|^{p-2}\,dt
\geq 
|v_1|^{p-2}\,\int_0^1 \(1-t\)^{p-2}\,dt,
\end{align*}
obtaining  
$
J\geq C_p\,|v_1|^{p-2}$.
Now suppose $|v_1|\leq |v_2|$.
Writing $v_1=\alpha v_2+v_2^\perp$ with $v_2^\perp\perp v_2$ yields 
$
|\alpha| =\frac{ |v_1 \cdot v_2| }{|v_2|^2}\leq \frac{ |v_1| \,|v_2| }{|v_2|^2}\leq 1$,
and 
$
|v_1+t \,v_2|^2=|(\alpha+t)v_2+v_2^\perp|^2=(\alpha+t)^2|v_2|^2+|v_2^\perp|^2
\geq (\alpha+t)^2|v_2|^2
$.
So, making the changing variables $\alpha+t=\tau$ we obtain
\begin{align*}
J\geq |v_2|^{p-2} 
\,
\int_0^1 |\alpha+t|^{p-2}\,dt=\,|v_2|^{p-2} 
\,
\int_\alpha^{\alpha +1}|\tau|^{p-2}\,d\tau &\geq \,|v_2|^{p-2} 
\,\min_{\alpha \in[-1,1]}\left(
\int_\alpha^{\alpha +1}|\tau|^{p-2}\,d\tau\right)\\
&=\,|v_2|^{p-2}
\int_{-1/2}^{1/2}|\tau|^{p-2}\,d\tau.
\end{align*}
Therefore,  
$
J\geq c_p \,|v_2|^{p-2}
$
and the lemma follows.
\end{proof}
\begin{lemma}\label{lemma3}
There are positive constants $c_p,C_p$ such that for all $a,b\in \R^n$ 
we have 
\begin{equation*}
{\lambda c_p}|a-b|^p\leq \left(Dh(a)-Dh(b)\right)\cdot (a-b)\leq {\Lambda C_p}|a-b|^{2}\max\Big\{{|b|^{p-2}}, {|a-b|^{p-2}}\Big\},
\end{equation*}
and consequently 
\begin{equation}\label{second bis}
|Dh( a)-Dh( b)|\geq  \lambda c_p|a-b|^{p-1}.
\end{equation}

\end{lemma}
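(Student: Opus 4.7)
The plan is to reduce the lemma to Lemma \ref{lemma2} via the fundamental-theorem-of-calculus representation of $Dh(a)-Dh(b)$ combined with the homogeneity of $D^2h$ and the ellipticity bound \eqref{eq:strict ellipticity of D2h}.

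First I would apply the second formula in \eqref{eq:integral formula used many times} to write
\[
\(Dh(a)-Dh(b)\)\cdot (a-b)
=
\int_0^1 \left\langle D^2h\(b+t(a-b)\)(a-b),(a-b)\right\rangle\,dt.
\]
Since $h$ is positively homogeneous of degree $p$, the matrix $D^2h$ is positively homogeneous of degree $p-2$, so $D^2h(z)=|z|^{p-2}D^2h(z/|z|)$ for $z\neq 0$, and \eqref{eq:strict ellipticity of D2h} yields
\[
\lambda\,|b+t(a-b)|^{p-2}\,|a-b|^2
\leq
\left\langle D^2h\(b+t(a-b)\)(a-b),(a-b)\right\rangle
\leq
\Lambda\,|b+t(a-b)|^{p-2}\,|a-b|^2,
\]
so integrating in $t\in[0,1]$ gives
\[
\lambda\,|a-b|^2\,J
\leq
\(Dh(a)-Dh(b)\)\cdot (a-b)
\leq
\Lambda\,|a-b|^2\,J,
\qquad
J:=\int_0^1|b+t(a-b)|^{p-2}\,dt.
\]

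Next I would apply Lemma \ref{lemma2} with $v_1=b$ and $v_2=a-b$, giving
\[
c_p\,\(\max\{|b|,|a-b|\}\)^{p-2}\leq J\leq C_p\,\(\max\{|b|,|a-b|\}\)^{p-2}.
\]
The upper bound in the statement follows at once, since $\max\{|b|,|a-b|\}^{p-2}=\max\{|b|^{p-2},|a-b|^{p-2}\}$ for $p\geq 2$. For the lower bound, bound $\max\{|b|,|a-b|\}^{p-2}\geq |a-b|^{p-2}$ to obtain
\[
\(Dh(a)-Dh(b)\)\cdot (a-b)\geq \lambda\,c_p\,|a-b|^2\,|a-b|^{p-2}=\lambda\,c_p\,|a-b|^p.
\]
Finally, inequality \eqref{second bis} follows by Cauchy--Schwarz: the inner product on the left is at most $|Dh(a)-Dh(b)|\,|a-b|$, so dividing the lower bound just obtained by $|a-b|$ (the case $a=b$ being trivial) gives $|Dh(a)-Dh(b)|\geq \lambda\,c_p\,|a-b|^{p-1}$.

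There is no real obstacle: the only point worth flagging is the matching of variables in Lemma \ref{lemma2} (taking $v_1=b$ and $v_2=a-b$ rather than $v_1=a$, $v_2=b$), which is what allows the upper bound to come out in the asymmetric form $\max\{|b|^{p-2},|a-b|^{p-2}\}$ rather than $\max\{|a|^{p-2},|b|^{p-2}\}$ as stated.
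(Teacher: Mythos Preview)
Your proof is correct and follows exactly the same route as the paper: the integral representation from \eqref{eq:integral formula used many times}, the ellipticity/homogeneity bound, and Lemma \ref{lemma2} applied with $v_1=b$, $v_2=a-b$, with \eqref{second bis} deduced via Cauchy--Schwarz. The paper's proof is simply a terser version of what you wrote.
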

\begin{proof}
From \eqref{eq:integral formula used many times}, \eqref{eq:strict ellipticity of D2h}, and Lemma \ref{lemma2} we have
\begin{align*}
\left(Dh(a)-Dh(b)\right)\cdot (a-b) &= \int_0^1 \langle D^2h(b+t(a-b))(a-b), a-b \rangle \,  dt \\
&\leq \Lambda |a-b|^2\int_0^1|b+t(a-b)|^{p-2} \, dt
\leq \Lambda C_p|a-b|^{2}\max \{|b|^{p-2}, |a-b|^{p-2}\},
\end{align*}
and similarly the opposite inequality follows.

\end{proof}
\begin{theorem}\label{Holder}
If $T$ is an $h$-monotone map with $T\in L^{p-1}_\text{loc}(\R^n)$ and $h$ satisfying the assumptions of Theorem \ref{thm:main Linfty estimate p-1}, then  
\[
\sup_{B_{R}(x_0)}|Tx-Tx_0|=O(R^{1/(p-1)})\quad \text{as $R\to 0$} 
\]
for almost all $x_0\in \R^n$;
that is, $T\in T^{1/(p-1),\infty}(x_0)$ for a.e. $x_0$.\end{theorem}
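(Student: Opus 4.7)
The plan is to pair Theorem \ref{thm:h-monotone is bounded deformation} with Theorem \ref{thm:main Linfty estimate p-1}, using the lower bound \eqref{second bis} of Lemma \ref{lemma3} as the bridge that converts information on $Dh(x-Tx)$ into information on $(x - Tx) - (x_0 - Tx_0)$. The key algebraic observation is that if in Theorem \ref{thm:main Linfty estimate p-1} we take $A = I$ and $b = Tx_0 - x_0$, then $u(x) = Tx - x - (Tx_0 - x_0)$ satisfies $|u(x)| = |(x - Tx) - (x_0 - Tx_0)|$, so an $L^{p-1}$-average bound on $u$ converts directly into a bound on $|Tx - Tx_0|$ modulo $|x - x_0|$.

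First I would fix $x_0$ in the full-measure set where both $Dh(\cdot - T\cdot) \in t^{1,1}(x_0)$, as supplied by Theorem \ref{thm:h-monotone is bounded deformation}, and $x_0$ is a Lebesgue point of the locally integrable map $x \mapsto Dh(x-Tx)$. At such a point the $t^{1,1}$-approximating affine polynomial $P_{x_0}$ necessarily satisfies $P_{x_0}(x_0) = Dh(x_0 - Tx_0)$, and $|P_{x_0}(x) - P_{x_0}(x_0)| = O(R)$ uniformly on $B_R(x_0)$, so the triangle inequality yields
\[
\fint_{B_R(x_0)} \bigl|Dh(x-Tx) - Dh(x_0 - Tx_0)\bigr| \, dx = o(R) + O(R) = O(R).
\]
Raising \eqref{second bis} to the $(p-1)$-th power with $a = x - Tx$ and $b = x_0 - Tx_0$ gives the pointwise estimate $|u(x)|^{p-1} \leq C\,|Dh(x-Tx) - Dh(x_0 - Tx_0)|$, and integration then produces
\[
\Bigl(\fint_{B_R(x_0)} |u(x)|^{p-1} \, dx\Bigr)^{1/(p-1)} = O\bigl(R^{1/(p-1)}\bigr).
\]

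I would then invoke Theorem \ref{thm:main Linfty estimate p-1} with this $u$ and some fixed $\beta \in (0,1)$. A direct check shows that in either branch of \eqref{eq:main L infty estimate general cost bis bis thm} the output simplifies to $\sup_{B_{\beta R}(x_0)} |u| = O(R^{1/(p-1)})$: in the large-average regime the factor $R\,(R^{-(p-1)}\fint|u|^{p-1})^{1/(p-1)}$ collapses to $(\fint|u|^{p-1})^{1/(p-1)}$, while in the small-average regime the exponent $(n+1)/(n+p-1)$ exceeds $1/(p-1)$ exactly when $p \geq 2$. Combining with $|x-x_0| \leq R \leq R^{1/(p-1)}$ for $R$ small and $p \geq 2$, the triangle inequality gives $\sup_{B_{\beta R}(x_0)}|Tx - Tx_0| = O(R^{1/(p-1)})$, and a harmless rescaling $R \mapsto R/\beta$ delivers the statement on $B_R(x_0)$.

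The only delicate point is the dichotomy in Theorem \ref{thm:main Linfty estimate p-1}: since $R^{1/(p-1)}$ strictly dominates $R$ as $R \to 0$ for $p > 2$, one cannot discard the large-average branch, so one must verify the exponent inequality $(n+1)(p-1) \geq n+p-1$ to confirm that both branches produce the same $R^{1/(p-1)}$ rate. Apart from this bookkeeping, the argument is little more than the two main theorems assembled through Lemma \ref{lemma3}.
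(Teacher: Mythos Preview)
Your proposal is correct and follows essentially the same route as the paper: combine Theorem \ref{thm:h-monotone is bounded deformation} (to get $\fint_{B_R(x_0)}|Dh(x-Tx)-Dh(x_0-Tx_0)|\,dx=O(R)$ at Lebesgue points), use \eqref{second bis} to convert this into an $L^{p-1}$-average bound, and feed that into Theorem \ref{thm:main Linfty estimate p-1}. The only cosmetic difference is your choice of affine correction: you take $A=I$, $b=Tx_0-x_0$ (so $u(x)=(x-Tx)-(x_0-Tx_0)$ is controlled directly by \eqref{second bis}, and the triangle inequality $|Tx-Tx_0|\leq|u(x)|+|x-x_0|$ is applied at the end), whereas the paper takes $A=0$, $b=Tx_0$ (so $u(x)=Tx-Tx_0$, and the triangle inequality is applied upfront before invoking \eqref{second bis}); both routes give the same estimate. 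Your handling of the small-average branch, computing the exponent $(n+1)/(n+p-1)$ from $\fint|u|^{p-1}=O(R)$ alone, is slightly less sharp than simply using the branch hypothesis $(\fint|u|^{p-1})^{1/(p-1)}\leq CR$ to get $O(R)$ directly, but the conclusion is the same since $(n+1)(p-1)\geq n+p-1$ for $p\geq 2$.
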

\begin{proof}

By Theorem \ref{thm:h-monotone is bounded deformation} we know that the mapping $Dh(x-Tx)\in t^{1,1}(x_0)$. Hence $Dh(x-Tx)\in T^{1,1}(x_0)$ and consequently at each Lebesgue point $x_0$ of the function $Dh(x-Tx)$ it follows that
\begin{equation*}\label{eq1}
\fint_{B_R(x_0)}|Dh(x-Tx) - Dh(x_0-Tx_0) |\,dx=O\(R\)\quad \text{as $R\to 0$}, 
\end{equation*}
that is, for a.e. $x_0\in \R^n$.
In fact, since $Dh(x-Tx)\in T^{1,1}(x_0)$, there is a polynomial $P_{x_0}(x)$ of degree less than 1, i.e, a constant such that $\fint_{B_R(x_0)}|Dh(x-Tx) - P_{x_0}(x) |\,dx=O\(R\)$. If $x_0$ is a Lebesgue point of $Dh(x-Tx)$, then $\fint_{B_R(x_0)}|Dh(x-Tx) - Dh(x_0-Tx_0) |\,dx=O(1)$. Hence
\begin{align*}
&\fint_{B_R(x_0)}|P_{x_0}(x) - Dh(x_0-Tx_0) |\,dx\\
&\leq
\fint_{B_R(x_0)}|Dh(x-Tx) - Dh(x_0-Tx_0) |\,dx+\fint_{B_R(x_0)}|P_{x_0}(x) - Dh(x-Tx) |\,dx\\
&=O(1)+O(R)
\end{align*}
and so $P_{x_0}(x) - Dh(x_0-Tx_0)=0$ and we are done.

Applying triangle inequality and inequality \eqref{second bis} with $a=Tx-Tx_0$ and $ b=x-x_0$ yields
\begin{align*}
\fint_{B_R(x_0)}|Tx-Tx_0|^{p-1}\, dx&\leq \fint_{B_R(x_0)}(|Tx-Tx_0-(x-x_0)|+ |x-x_0|)^{p-1} \,dx\\
&\leq 2^{p-1} \fint_{B_R(x_0)}|Tx-Tx_0-(x-x_0)|^{p-1}\,dx + 2^{p-1}\fint_{B_R(x_0)} |x-x_0|^{p-1} \,dx\\
& \leq \frac{2^{p-1}}{\lambda c_p} \fint_{B_R(x_0)}|Dh(x-Tx) - Dh(x_0-Tx_0) |\,dx+ O\(R^{p-1}\)\quad \text{as $R\to 0$} \\
&=O\(R\)\quad \text{as $R\to 0$,} 
\end{align*}
at a.e. $x_0\in \R^n$, since $p\geq 2$.
Hence for a.e. $x_0$ there exist constants $M(x_0)\geq 0$ and $R_0>0$  such that
\[
\(\fint_{B_R(x_0)}|Tx-Tx_0|^{p-1}\,dx\)^{1/(p-1)}\leq M(x_0)\,R^{1/(p-1)}
\]
for all $0<R<R_0.$
We now apply Theorem \ref{thm:main Linfty estimate p-1} to $u(x)= Tx-Tx_0$ with $0<R<R_0$. 
We have either $\(\fint_{B_R(x_0)}|u(x)|^{p-1}\,dx\)^{1/(p-1)}\leq C(n,p,\beta)\,R$ \,  or \, $\(\fint_{B_R(x_0)}|u(x)|^{p-1}\,dx\)^{1/(p-1)}\geq C(n,p,\beta)\,R$.
In the first case, 
we get from \eqref{eq:main L infty estimate general cost bis}
\[
\sup_{B_{\beta R}(x_0)}|Tx-Tx_0|\leq
K_1\,R^{n/(n+p-1)}\,\(\fint_{B_R(x_0)}|Tx-Tx_0|^{p-1}\,dx\)^{1/(n+p-1)}
\leq
C'(n,p,\beta)\,R.
\]
In the second case,
\[
\sup_{B_{\beta R}(x_0)}|Tx-Tx_0|\leq
K_2\,\(\fint_{B_R(x_0)}|Tx-Tx_0|^{p-1}\,dx\)^{1/(p-1)}
\leq
K_2\,M(x_0)\,R^{1/(p-1)}.
\]
Therefore, $\sup_{B_{R}(x_0)}|Tx-Tx_0|=O(R^{1/(p-1)})$ as $R\to 0$ for all a.e. $x_0\in \R^n$ as desired.

\end{proof}
\section{Appendix}\label{sec:appendix}
\setcounter{equation}{0}
Recall that $\Gamma(x)=\dfrac{1}{n\omega_n (2-n)}|x|^{2-n}$ with $n>2$, where $\omega_n$ is the volume of the unit ball in $\R^n$, and  
the Green's representation formula
\[
v(y)=\int_{\partial \Omega} \(v(x)\,\dfrac{\partial \Gamma}{\partial \nu}(x-y)-\Gamma(x-y)\,\dfrac{\partial v}{\partial \nu}(x)\)\,d\sigma(x)+\int_\Omega \Gamma(x-y)\,\Delta v(x)\,dx
\]
where $\nu$ is the outer unit normal and $y\in \Omega$.
If $\Omega=B_\rho(y)$, then 
$\dfrac{\partial \Gamma}{\partial \nu}(x-y)=\dfrac{1}{n\,\omega_n}\,|x-y|^{1-n}$ and so the representation formula reads
\begin{align*}
v(y)
&=
\fint_{|x-y|=\rho}v(x)\,d\sigma(x)-\Gamma(\rho)\,\int_{|x-y|=\rho} \dfrac{\partial v}{\partial \nu}(x)\,d\sigma(x)
+\int_{|x-y|\leq \rho} \Gamma(x-y)\,\Delta v(x)\,dx\\
&=
\fint_{|x-y|=\rho}v(x)\,d\sigma(x)
+\int_{|x-y|\leq \rho} \(\Gamma(x-y)-\Gamma(\rho)\)\,\Delta v(x)\,dx
\end{align*}
from the divergence theorem.
Multiplying the last identity by $\rho^{n-1}$ and integrating over $0\leq \rho\leq r$ yields
\begin{equation}\label{eq:third Green identity}
v(y)=\fint_{|x-y|\leq r}v(x)\,dx+\dfrac{n}{r^n}\,\int_0^r \rho^{n-1}\int_{|x-y|\leq \rho}\(\Gamma(x-y)-\Gamma(\rho)\)\,\Delta v(x)\,dx\,d\rho.
\end{equation}


%

\begin{thebibliography}{GvN07}



\bibitem[AA99]{AA}
G.~Alberti and L.~Ambrosio, \emph{A geometrical approach to monotone functions in $\R^n$}, Mathematische Zeitschrift
  \textbf{230} (1999), 259--316.
  
\bibitem[ACDM97]{ACDM}
L.~Ambrosio, A.~Coscia and G.~Dal Maso, \emph{Fine properties of functions with bounded deformation}, Arch. Rational Mech. Anal.,
  \textbf{139} (1997), 201--238.  

\bibitem[AGS05]{ambrosio-gigli-savare-book}
L.~Ambrosio, N.~Gigli and G.~Savar\'e,
\newblock {\em Gradient Flows in Metric Spaces and in the Space of Probability Measures}.
\newblock Lectures in Mathematics, Birkh\"auser, 2005.   
  

\bibitem[BJM]{2007-bouchitte}
G.~Bouchitt\'e, C.~Jimenez, and  R.~Mahadevan,
\newblock A new $L^\infty$ 
 estimate in optimal mass transport.
\newblock {\em Proc. Amer. Math. Soc.}, 135(11):3525--3535, 2007.


\bibitem[B73]{brezis-book-monotone-maps}
H. Br\'ezis,
\newblock {\em Operateurs Maximaux Monotones et Semi-Groupes de Contractions dans les Espaces de Hilbert}.
\newblock North-Holland Mathematical Studies \#50, 1973.  


\bibitem[CZ61]{Cal-Z}
A.~P.~ Calder\'on and A.~ Zygmund, \emph{Local Properties of Solutions of Elliptic Partial Differential Equations}, Studia Math. 
  \textbf{XX} (1961), 171--225.
  
  

\bibitem[GH09]{gutierrez-huang:farfieldrefractor}
C.~E. Guti\'errez and Qingbo Huang,
\newblock The refractor problem in reshaping light beams.
\newblock {\em Arch. Rational Mech. Anal.}, 193(2):423--443, 2009.

\bibitem[GOdf]{2020-goldman-otto-variational}
Michael Goldman and Felix Otto,
\newblock A variational proof of partial regularity for optimal transportation
  maps.
\newblock {\em Ann. Scient. Ec. Norm. Sup.}, 53:1209--1233, 2020,
  \url{https://arxiv.org/pdf/1704.05339.pdf}.

\bibitem[G16]{Gut:book}
C.~E. Guti{\'e}rrez,
\newblock {\em The Monge-Amp\`ere equation}.
\newblock Progress in Nonlinear Differential Equations and Applications, Vol. 89, Birkh\"auser, Boston, MA, 2nd Edition, 2016.


\bibitem[G23]{Gutierrez:21}
C.~E. Guti{\'e}rrez,
\newblock Optimal
  transport and applications to Geometric Optics.
\newblock {\em SpringerBriefs on PDEs and Data Science}, Springer, 
  2023.

 
\bibitem[GM22]{Gutierrez-Montanari:Linfty-estimates}
C.~E. Guti{\'e}rrez and A. Montanari,
\newblock $L^\infty$-estimates in optimal
  transport for non quadratic costs.
\newblock {\em Calculus of Variations and Partial Differential Equations},
61, 163 (2022). \url{https://doi.org/10.1007/s00526-022-02245-0}. 
  
\bibitem[GM24]{Gutierrez-Montanari:ermanno-paper}
C.~E. Guti{\'e}rrez and A. Montanari,
\newblock Fine properties of monotone maps arising in optimal transport for non quadratic costs. 
\newblock {Submitted}. \url{https://doi.org/10.48550/arXiv.2208.00193}.  
  
  
 
  
  
  
\bibitem[GvN07]{Gutierrez:2007fk}
C.~E. Guti{\'e}rrez and Truyen van Nguyen,
\newblock On {M}onge--{A}mp{\`e}re type equations arising in optimal
  transportation problems.
\newblock {\em Calculus of Variations and Partial Differential Equations},
  28(3):275--316, 2007.

\bibitem[H96]{Haj}
P.~Hajlasz, \emph{On approximate differentiability of functions with bounded deformation}, Manuscripta Math.
  \textbf{91} (1996), 61--72.

\bibitem[LH90]{LH:book on pdes vol I}
Lars H\"ormander, \emph{The Analysis of Linear Partial Differential Operators I}, Grundlehren der
mathematischen Wissenschaften, Vol. 256, Springer-Verlag, 1990, 2nd Edition.



\bibitem[Mig76]{Mig}
F.~Mignot, \emph{Contr\^ole dans les In\'equations Variationelles Elliptiques}, J. Funct.
  Anal. \textbf{22} (1976), 130--185.
  
%
%
%
\bibitem[OPRdf]{2020-otto-prodhomme-ried-general-costs}
Felix Otto, Maxime Prod'Homme, and Tobias Ried,
\emph{Variational approach to regularity of optimal transport maps: general
  cost functions}, Ann. PDE 7, 17 (2021)
\newblock \url{https://arxiv.org/pdf/2007.11549.pdf}.






\bibitem[St70]{Stein}
E.M.~Stein, \emph{Singular Integrals and Differentiability Properties of Functions}, Princeton University Press, Princeton, NJ, 1970.

\bibitem[T83]{Temam}
R.~Temam, \emph{Probl\`emes Math\'ematiques en Plasticit\'e}, Gauthier-Villards, Paris, 1983.



\bibitem[Zi89]{Zim}
W.P.~Ziemer, \emph{Weakly Differentiable Functions}, Graduate Texts in Mathematics, vol. 120, Springer-Verlag, 1989.

\end{thebibliography}

\end{document}